\numberwithin{equation}{section}
\def \ker {{\rm ker}\,}
\def \dim {{\rm dim}\,}
\newcommand{\mf}{\mathfrak}
\newcommand{\m}{\CMcal}
\theoremstyle{plain}
\newtheorem{theorem}{Theorem}[section]
\newtheorem{proposition}[theorem]{Proposition}
\newtheorem{definition}[theorem]{Definition}
\newtheorem{remark}[theorem]{Remark}
\newtheorem{example}[theorem]{Example}
\begin{document}
	\title[Quantum $U$-channels on $S$-spaces]{Quantum $U$-channels on $S$-spaces}
	\date{\today}
	
	\date{\today}
		\author[Bag]{Priyabrata Bag}
	\address{Department of Mathematics, School of Science, Gandhi Institute of Technology and Management (Deemed-to-be University), Doddaballapur Taluk, Bengaluru, Karnataka 561203, INDIA}
	\email{pbag@gitam.edu, priyabrata.bag@gmail.com}
	
	\author[Rohilla]{Azad Rohilla\textsuperscript{*}}
	
	\address{Centre for Mathematical and Financial Computing, Department of Mathematics, The LNM Institute of Information Technology, Rupa ki Nangal, Post-Sumel, Via-Jamdoli
		Jaipur-302031,
		(Rajasthan) INDIA}
	
	\email{18pmt005@lnmiit.ac.in, azadrohilla23@gmail.com}
	\thanks{*corresponding author}
	
	\author[Trivedi]{Harsh Trivedi}
	\address{Centre for Mathematical and Financial Computing, Department of Mathematics, The LNM Institute of Information Technology, Rupa ki Nangal, Post-Sumel, Via-Jamdoli
		Jaipur-302031,
		(Rajasthan) INDIA}
	\email{harsh.trivedi@lnmiit.ac.in, trivediharsh26@gmail.com}


	\begin{abstract}
	If the symmetry, (an operator $J$ satisfying $J=J^*=J^{-1}$) which defines the Krein space, is replaced by a (not necessarily self-adjoint) unitary, then we have the notion of an $S$-space which was introduced by Szafraniec. 
	In this paper, we consider $S$-spaces and study the structure of completely $U$-positive maps between the algebras of bounded linear operators. We first give a Stinespring-type representation for a completely $U$-positive map. On the other hand, we introduce Choi $U$-matrix of a linear map and  establish the equivalence of the Kraus $U$-decompositions and Choi $U$-matrices. Then we study properties of nilpotent completely $U$-positive maps. We develop the $U$-PPT criterion for separability of quantum $U$-states and discuss the entanglement breaking condition of quantum $U$-channels and explore $U$-PPT squared conjecture. Finally, we give concrete examples of  completely $U$-positive maps and examples of $3 \otimes 3$ quantum $U$-states which are $U$-entangled and $U$-separable.
\end{abstract}

	\keywords{Completely positive maps, Krein space, Quantum channels, Choi decomposition, Stinespring decomposition, $S$-space.}
	\subjclass[2010]{46E22,~46L05,~46L08,~47B50,~81T05.}
	\maketitle

	\section{Introduction}
	The Gelfand-Naimark-Segal (GNS) construction 
	for a given state on a $C^*$-algebra provides us a representation of the $C^*$-algebra 
	on a Hilbert space and a generating vector. 
	A linear map $\tau$ from a $C^*$-algebra $\m B$ to a $C^*$-algebra 
	$\m C$ is said to be {\it completely positive (CP)} if $\sum_{i,j=1}^n c_j^{*}
	\tau(b_j^{*}b_i)c_i\geq 0$ whenever $b_1,b_2,\ldots,b_n\in\m B$; $c_1,c_2,\ldots,c_n\in\m C$ and $n\in \mathbb N$. Stinespring's 
	theorem (cf. \cite[Theorem 1] {St55}), which characterizes operator-valued completely positive maps, is a generalization of the GNS construction. Choi decomposition (cf. \cite{choi}) for completely positive maps is a pioneering work in Matrix Analysis.

	Dirac \cite{D42} and Pauli \cite{P43} were among the 
	pioneers to 
	explore the quantum field theory using Krein spaces, defined below. For our study, we require the following important definitions:
	
	\begin{definition} Assume $(\m K,\langle\cdot,\cdot\rangle)$ 
		to be a Hilbert space and $J$ to be a symmetry, that is, $J=J^*=J^{-1}$. 
		Define a map $[\cdot,\cdot]:\m K\times \m K\to  \mathbb{C}$ by
		\begin{eqnarray}
			[x,y]_J: =\langle Jx,y\rangle~\mbox{for all}~x,y\in \m K.
		\end{eqnarray}
		The tuple $(\m K,J)$ is called a {\it Krein space} (cf. \cite{B74}).
	\end{definition}

	\begin{definition}
		For each $V \in B(\m K),$ there exists an operator $V^{\natural}:=JV^{*}J \in B(\m K)$ such that
		\begin{align*}
			[Vx,y]_J &= \langle JVx,y\rangle= \langle x,V^{*}Jy\rangle=\langle x,J^{*}JV^{*}Jy\rangle\\&=\langle J x,JV^{*}Jy\rangle=\langle J x,V^{\natural}y\rangle=	[x,V^{\natural}y]_J.
		\end{align*} The operator $V^{\natural}$ is called the $J$-adjoint of $V.$ 
	\end{definition}
	
	In the definition of the Krein space, if we 
	replace the symmetry $J$ by a (not necessarily self-adjoint) unitary $U$,
	then we arrive at the following generalized notion due to Szafraniec \cite{Sz09}: 
	\begin{definition}
		Let $(\m H,\langle\cdot,\cdot\rangle)$ be a Hilbert space and let $U$ be a unitary on $\m H$, that is, $U^*=U^{-1}$. Then we can define a sesquilinear form by
		\begin{eqnarray}
			[x,y]_U: =\langle x,Uy\rangle~\mbox{for all}~x,y\in \m H.
		\end{eqnarray}
		In this case, we call {\rm $(\m H, U)$ as an $S$-space}. 
	\end{definition} The following definition is given by Phillipp, Szafraniec and Trunk, see \cite[Definition 3.1]{PST11}: \begin{definition}
		For each $V \in B(\m H),$ there exists an operator $V^{\#}:= UV^{*}U^{*} \in B(\m H)$ such that
		\begin{align*}
			[x,Vy]_U &= \langle x,UVy\rangle= \langle V^{*}U^{*}x,y\rangle=\langle U^{*}UV^{*}U^{*}x,y\rangle\\&=\langle UV^{*}U^{*}x,Uy\rangle=	[V^{\#}x,y]_U.
		\end{align*}The operator $V^{\#}$ is called the {\rm $U$-adjoint} of $V.$
	\end{definition} Phillipp, Szafraniec and Trunk \cite{PST11} 
	investigated invariant subspaces of self-adjoint operators
	in Krein spaces by using results obtained through a detailed analysis of S-spaces. Recently, in \cite{RR22}, Felipe-Sosa and Felipe introduced and analyzed the notions of state and quantum channel on spaces equipped with an indefinite metric in terms of a symmetry $J$. This study was further taken up by Heo, in \cite{He22}, where equivalence of Choi $J$-matrices and Kraus $J$-decompositions was obtained and applications to $J$-PPT criterion and $J$-PPT squared conjuncture were discussed. The notion of completely $U$-positive maps was studied by Dey and Trivedi in \cite{DT17,DT19}. Motivated by these inspiring works, in this paper, we develop structure theory of quantum $U$-channels and its applications to the entanglement breaking.
	
	The plan of the paper is as follows: In Section $\ref{sec1},$ we give Stinespring-type representation for a completely $U$-positive map. In Section $\ref{sec3},$ Choi $U$-matrix  is introduced and the equivalence of Kraus $U$-decompositions and Choi $U$-matrices is established. In Section $\ref{new},$ some properties of nilpotent $U$-CP maps are discussed. In Sections $\ref{sec4}$ and $\ref{sec5},$ we develop $U$-PPT criterion for separability of quantum $U$-states and discuss the entanglement breaking condition of quantum $U$-channels and explore $U$-PPT squared conjecture. Finally, in Section $\ref{sec6},$ we give concrete examples of completely $U$-positive maps and examples of $3 \otimes 3$ quantum $U$-states which are $U$-entangled and $U$-separable.
	
	\subsection{Background and notations}
	Let $(\m H, U)$ be an $S$-space. Then, $\m H^n$ is the direct sum of $n$-copies of the Hilbert space $\m H,$ and we denote by $(\m H^n,U^n)$ the $S$-space with the indefinite inner-product 
	\begin{align} [{\bf h},{\bf k}]_{U^n}=\left<{\bf h}, U^n{\bf k}\right>=\sum^n_{j=1} \left<h_j,Uk_j\right>=\sum^n_{j=1} [h_j,k_j]_U
	\end{align}	
	where $U^n=\mbox{diag}(U,U,\ldots,U)\in M_n(B(\m H))$ and ${\bf h}=(h_1,\ldots,h_n),~ {\bf k}=(k_1,\ldots,k_n)\in \m H^n.$
	
	\begin{definition}
		Let  $(\m H,U)$ be an $S$-space with the indefinite inner-product $[\cdot,\cdot]_{U}.$ We denote by $B(\m H)^{U+}$ the set of all $U$-positive linear operator $V$ on $\m H,$ that is, $$0\leq [Vh,h]_{U}:=\left< Vh, Uh\right>=\left< U^{*}Vh, h\right>, \mbox{ for all }~h\in \m H.$$  Hence $V$ is $U$-positive if  and only if $U^{*}V$ is positive with respect to the usual inner product $\langle\cdot,\cdot\rangle.$

	\end{definition}

	\begin{definition}\label{def1}
		Let $(\m H_i,U_i)~ (i=1,2)$ be an $S$-space with the indefinite inner-product $[\cdot,\cdot]_{U_i}.$ Let $\phi : B (\m H_1) \to  B (\m H_2) $ be a linear map. Then $\phi$ is called {\rm $(U_1,U_2)$-Hermitian} if $\phi(U_1V^*U^*_1)=U_2\phi(V^*)U^*_2$  for  $V\in B(\m H_1).$ We say that a $(U_1,U_2)$-Hermitian linear map $\phi$ is
		\begin{enumerate}
			\item {\it $(U_1,U_2)$-positive} if  $\phi(B(\m H_1)^{U+})\subset B(\m H_2)^{U+},$ that is, if $V \in (B(\m H_1))^{U+}$ (or $V$ is $U_{1}$-positive), then $\phi(V)$ is $U_2$-positive. In simple words, if $U_{1}^{*}V$ is positive  with respect to the usual inner product $\langle\cdot,\cdot\rangle_{\m H_1},$ then $U_{2}^{*}\phi (V)$ is positive  with respect to the usual inner product $\langle\cdot,\cdot\rangle_{\m H_2}.$
			\item {\it completely  $(U_1,U_2)$-positive} or
			$(U_1,U_2)$-{\it CP} if for each $l\in\mathbb N$ the $l$-fold amplification $\phi^{l}: I_l\otimes \phi: M_l(\mathbb C)\otimes B(\m H_1) \to M_l(\mathbb C) \otimes B(\m H_2) $ 
			defined by \begin{equation*}
				\phi^{l}([V_{ij}])=[\phi(V_{ij})], \:\:\:\:\text{for}\:\:\:\: [V_{ij}]\in M_l(B(\m H_1))
			\end{equation*}satisfies $$ \phi^{l}  (M_l(B(\m H_1))^{U+})\subset  M_l(B(\m H_2))^{U+},$$ that is, if $V=[V_{ij}]_{i,j} \in M_{l}( B(\m H_1))^{U+}$ (i.e., $V$ is $U_{1}^{l}$-positive), then $\phi^{l}(V)$ is $U_2^{l}$-positive. Here $M_l(B(\m H_i))^{U+}=B(\m H^l_i)^{U+}$ is the set of all $U^l_i$-positive linear operators on $S$-spaces $(\m H^l_i, U^l_i),$  and $U^l_{i}=\mbox{diag}(U,U,\ldots,U)$ $\in M_l(B(\m H_{i}))$ for $i=1,2.$
			\item {\it $U$-positive} (and {\it completely $U$-positive} ($U$-CP) )if $\m H_1=\m H_2=\m H$ and $U_1=U_2=U$ and it is ($U_1, U_2)$-positive (and $(U_1, U_2)$-CP, respectively).
		\end{enumerate}
	\end{definition}
		

		\section{Completely $U$-positive and completely $U$-co-positive maps}\label{sec1}
		Our main objective in this section is to obtain Stinespring-type theorem for completely $U$-positive maps.
		Let $(\m H_i, U_i)~(i=1,2)$ be an $S$-space with the indefinite inner product $[\cdot,\cdot]_{U_i}.$ Suppose $\phi : B (\m H_1) \to  B (\m H_2) $ is a linear map. Define a linear map $\psi$  from $B(\m H_1)$ to $B(\m H_2)$  by $\psi(X):=U_2\phi(U_1^{*}X)$ where $X \in B(\m H_1) .$ For any $l \in \mathbb{N}$ and $V=[V_{ij}]\in M_{l}( B (\m H_1)),$ we obtain
		\begin{align*}
			\psi^{l}(V)&=[\psi(V_{ij})]_{i,j}=[U_2\phi(U_1^{*}V_{ij})]_{i,j}= \begin{pmatrix}U_{2}\phi(U_{1}^{*}V_{11}) & \cdots  & U_{2} \phi(U_{1}^{*}V_{1l})\\ \vdots  & \ddots & \vdots \\U_{2}\phi(U_{1}^{*}V_{l1}) &\cdots & U_{2}\phi(U_{1}^{*}V_{ll}) \end{pmatrix}\\&=\begin{pmatrix}U_{2}  &  &  0\\  & \ddots & \\0 & & U_{2} \end{pmatrix} \begin{pmatrix}\phi(U_{1}^{*}V_{11}) & \cdots  &  \phi(U_{1}^{*}V_{ll})\\ \vdots  & \ddots & \vdots \\\phi(U_{1}^{*}V_{l1}) &\cdots & \phi(U_{1}^{*}V_{ll}) \end{pmatrix}=U_{2}^{l}\phi^{l}(U_1^{l^{*}}V).
		\end{align*}Similarly, we can easily show that   $\phi^{l}(V)=U^{l^*}_2 \psi(U_1^{l}V)$ where  $\phi(V_{ij})=U^*_2 \psi(U_1V_{ij}).$

		The following result is a generalization of \cite[Theorem 20]{RR22} and \cite[Proposition 2.2]{He22} in the setting of $S$-spaces:
		\begin{proposition} \label{prop1}
			Let $(\m H_i, U_i)~(i=1,2)$ be an $S$-space with the indefinite inner product $[\cdot,\cdot]_{U_i}.$ Suppose $\phi : B (\m H_1) \to  B (\m H_2) $ is a linear map, then $\phi$ is CP if and only if the corresponding linear map $\psi$  from $B(\m H_1)$ to $B(\m H_2)$ defined by $\psi(X):=U_2\phi(U_1^{*}X)$  is $(U_1,U_2)$-CP, where $X \in B(\m H_1).$
		\end{proposition}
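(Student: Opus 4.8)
The plan is to reduce everything to the two operator identities already recorded just before the statement, namely
$$\psi^{l}(V)=U_2^{l}\,\phi^{l}(U_1^{l^{*}}V), \qquad \phi^{l}(V)=U_2^{l^{*}}\,\psi^{l}(U_1^{l}V),$$
together with the defining fact that an operator $W\in M_l(B(\mathcal{H}_i))$ is $U_i^l$-positive precisely when $U_i^{l^{*}}W\ge 0$ in the usual sense. The whole statement then follows by unwinding these definitions at each amplification level $l\in\mathbb{N}$. The one structural observation worth isolating is that left multiplication by $U_1^{l}$ is a bijection of $M_l(B(\mathcal{H}_1))$ carrying the cone of ordinary positive operators onto the cone of $U_1^l$-positive operators, with inverse $W\mapsto U_1^{l^{*}}W$; the same holds for $U_2$ in the target.

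For the forward implication I would assume $\phi$ is CP, fix $l$, and take a $U_1^l$-positive $V$, so that $U_1^{l^{*}}V\ge 0$. Applying the first identity and cancelling the unitary gives
$$U_2^{l^{*}}\psi^{l}(V)=U_2^{l^{*}}U_2^{l}\,\phi^{l}(U_1^{l^{*}}V)=\phi^{l}(U_1^{l^{*}}V).$$
Since $\phi$ is CP, $\phi^{l}$ sends the positive operator $U_1^{l^{*}}V$ to a positive operator, whence $U_2^{l^{*}}\psi^{l}(V)\ge 0$; that is, $\psi^{l}(V)$ is $U_2^l$-positive. As $l$ was arbitrary, $\psi$ is $(U_1,U_2)$-CP.

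For the converse I would run the computation in reverse. Assuming $\psi$ is $(U_1,U_2)$-CP, fix $l$ and take an arbitrary positive $W\in M_l(B(\mathcal{H}_1))$. Then $U_1^{l}W$ is $U_1^l$-positive, because $U_1^{l^{*}}(U_1^{l}W)=W\ge 0$, so by hypothesis $\psi^{l}(U_1^{l}W)$ is $U_2^l$-positive, i.e. $U_2^{l^{*}}\psi^{l}(U_1^{l}W)\ge 0$. The second identity identifies this operator with $\phi^{l}(W)$, so $\phi^{l}(W)\ge 0$. Hence every amplification $\phi^{l}$ is positive, which is exactly complete positivity of $\phi$.

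I do not expect a genuine obstacle: once the two conjugation identities are in hand the argument is purely a matter of matching the two positivity cones. The only point requiring care is to use the cone bijection in the correct direction in each implication — pulling a $U_1^l$-positive $V$ back to the positive operator $U_1^{l^{*}}V$ in the forward direction, and pushing a positive $W$ forward to the $U_1^l$-positive operator $U_1^{l}W$ in the converse — while remembering that the cancellation $U_2^{l^{*}}U_2^{l}=I$ is precisely the translation between $U_2^l$-positivity and ordinary positivity in the target.
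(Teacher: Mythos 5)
Your proposal is correct and follows essentially the same route as the paper: both directions rest on the identities $\psi^{l}(V)=U_2^{l}\phi^{l}(U_1^{l^{*}}V)$ and $\phi^{l}(V)=U_2^{l^{*}}\psi^{l}(U_1^{l}V)$, pulling a $U_1^l$-positive $V$ back to the positive operator $U_1^{l^{*}}V$ in the forward direction and pushing a positive $W$ forward to the $U_1^l$-positive operator $U_1^{l}W$ in the converse. The only difference is cosmetic: the paper verifies positivity by evaluating the indefinite inner products against vectors, whereas you cancel $U_2^{l^{*}}U_2^{l}=I$ directly at the operator level.
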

		
		\begin{proof}
			Let $\phi$ be a linear map from $B(\m H_1)$ to $B(\m H_2).$  First assume that  $\phi$ is CP. We have to prove that  $\psi$ is $(U_1, U_2)$-CP. For this purpose, let $V=[V_{ij}] \in M_l(B(\m H_1))^{U+} ,$ that is, $U_1^{l^{*}}V\in M_l(B(\m H_1))$ is positive, that is, $$0 \leq [V {\bf{h}},{\bf{h}}]_{U_{1}^{l}}=\langle V {\bf{h}},U_{1}^{l}{\bf{h}}\rangle=\langle U_{1}^{l^{*}} V {\bf{h}},{\bf{h}}\rangle,$$ where ${\bf{h}}\in \mathcal{H_{1}}^{l}.$  Consider \begin{align*}
				[\psi^{l}(V){\bf{h'}}, {\bf{h'}}]_{U_{2}^{l}}&=\langle \psi^{l}(V){\bf{h'}}, U_{2}^{l} {\bf{h'}} \rangle=\langle U_{2}^{l^{*}}\psi^{l}(V){\bf{h'}},  {\bf{h'}} \rangle\\&=\langle U^{l}_2\phi^{l}(U_1^{l^{*}}V){\bf{h'}}, U_{2}^{l} {\bf{h'}} \rangle=\langle \phi^{l}(U_1^{l^{*}}V){\bf{h'}},  {\bf{h'}} \rangle \geq 0,
			\end{align*} where ${\bf{h'}}\in \mathcal{H_{2}}^{l}.$  Therefore $\langle U_{2}^{l^{*}}\psi^{l}(V){\bf{h'}},  {\bf{h'}} \rangle \geq 0,$ that is,  $U_{2}^{l^{*}}\psi^{l}(V)$ is positive.  This proves that $\psi(V)$ is $U_{2}$-positive.  Thus $\psi$ is $(U_1, U_2)$-CP.

			Conversely, suppose that $\psi$ is $(U_1,U_2)$-CP. Since $\psi(\cdot)=U_2\phi(U_1^{*}\cdot),$   we get $\phi(U_1^{*}\cdot)=U^*_2 \psi(\cdot).$ Therefore $\phi(\cdot)=U^*_2 \psi(U_1\cdot).$ Let $V=[V_{ij}] \in M_l(B(\m H_1))^{+} ,$ then we have to show that $\phi^{l}(V) =[\phi({V_{ij}})]\in M_l(B(\m H_2))^{+}.$ Now \begin{equation*}
				0 \leq	\langle V {\bf{h}},{\bf{h}} \rangle=\langle U_{1}^{l}V {\bf{h}}, U_{1}^{l}{\bf{h}} \rangle=[U_{1}^{l}V{\bf{h}},{\bf{h}}]_{U_{1}^{l}},
			\end{equation*} where ${\bf{h}}\in \mathcal{H_{1}}^{l},$ it means, $U_{1}^{l}V\in M_l(B(\m H_1))^{U+}.$ Therefore \begin{align*}
				\langle \phi^{l}(V){\bf{h'}},{\bf{h'}} \rangle&=\langle U^{l^{*}}_2 \psi(U_1^{l}V){\bf{h'}},{\bf{h'}} \rangle =\langle  \psi(U_1^{l}V){\bf{h'}}, U^{l}_2{\bf{h'}} \rangle\\&=[\psi(U_1^{l}V){\bf{h'}}, {\bf{h'}}]_{U_{2}^{l}}\geq0,
			\end{align*} where  ${\bf{h'}}\in \mathcal{H_{2}}^{l}$ and  the last inequality follows from the fact that $U_{1}^{l}V\in M_l(B(\m H_1))^{U+}$ and hence  $\psi$ is $(U_1,U_2)$-CP.
			
		\end{proof}

		\begin{theorem} \label{2.2} Let $(\m H_i, U_i)~(i=1,2)$ be an $S$-space. Assume that a linear map $\psi$  from $B(\m H_1)$ to $B(\m H_2)$ defined by $\psi(V):=U_2\phi(U_1^{*}V)$ for all $V\in B(\m H_1)$ is $(U_1,U_2)$-CP. Then there exist an $S$-space $(\m H,U),$ a $*$-representation $\pi$ of $B(\m H_1)$ on the Hilbert space $\m H$ and a bounded linear operator $R:\m H_2\to \m H$ such that $$\psi(V)=R^{\#}\pi(V)R$$ where $U=\pi(U_1),$ and $R^{\#}:=U_2 R^* U^{*}.$ Moreover, if $\psi(U_1)=U_2,$ then $R^* R=I_{\m H_2}.$		
		\end{theorem}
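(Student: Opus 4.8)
The plan is to reduce the statement to the classical Stinespring dilation theorem by means of Proposition \ref{prop1}. Since $\psi$ is assumed to be $(U_1,U_2)$-CP and is defined exactly as in that proposition by $\psi(V)=U_2\phi(U_1^*V)$, Proposition \ref{prop1} tells us precisely that the ordinary linear map $\phi:B(\m H_1)\to B(\m H_2)$ is completely positive in the usual sense. This is the conceptual heart of the argument: the ``twisting'' by the unitaries $U_1,U_2$ converts the $(U_1,U_2)$-complete positivity of $\psi$ into genuine complete positivity of $\phi$, for which the classical machinery becomes available.

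Next I would invoke the classical Stinespring representation theorem (cf. \cite{St55}) applied to the CP map $\phi$. This furnishes a Hilbert space $\m H$, a $*$-representation $\pi:B(\m H_1)\to B(\m H)$, and a bounded operator $R:\m H_2\to\m H$ with $\phi(X)=R^*\pi(X)R$ for all $X\in B(\m H_1)$; passing to the minimal dilation guarantees that $\pi$ is unital, i.e. $\pi(I_{\m H_1})=I_{\m H}$. I would then set $U:=\pi(U_1)$ and simply unwind the definition of $\psi$, using multiplicativity of $\pi$:
\begin{align*}
\psi(V)&=U_2\phi(U_1^*V)=U_2R^*\pi(U_1^*V)R\\
&=U_2R^*\pi(U_1)^*\pi(V)R=\big(U_2R^*U^*\big)\pi(V)R=R^{\#}\pi(V)R,
\end{align*}
which is exactly the asserted formula once one recognizes $R^{\#}=U_2R^*U^*$. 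It remains to verify that $(\m H,U)$ is genuinely an $S$-space, i.e. that $U$ is unitary; this is immediate because $\pi$ is a unital $*$-homomorphism and $U_1$ is unitary, whence $U^*U=\pi(U_1^*U_1)=\pi(I_{\m H_1})=I_{\m H}=\pi(U_1U_1^*)=UU^*$.

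Finally, for the normalization clause, I would compute $\psi(U_1)=U_2\phi(U_1^*U_1)=U_2\phi(I_{\m H_1})$. If $\psi(U_1)=U_2$, then since $U_2$ is invertible we obtain $\phi(I_{\m H_1})=I_{\m H_2}$, and hence $R^*R=R^*\pi(I_{\m H_1})R=\phi(I_{\m H_1})=I_{\m H_2}$, as required. I do not anticipate a genuine obstacle in this argument: the only point demanding care is ensuring that the Stinespring representation is unital, so that $U=\pi(U_1)$ is a bona fide unitary and thus defines an $S$-space; this is precisely why selecting the minimal dilation is the appropriate move. Everything else is routine bookkeeping with adjoints and the multiplicativity of $\pi$.
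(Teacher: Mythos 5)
Your proposal is correct and follows essentially the same route as the paper: reduce to the ordinary CP map $\phi$ via Proposition \ref{prop1}, apply classical Stinespring to get $\phi(V)=R^*\pi(V)R$, set $U=\pi(U_1)$, and unwind $\psi(V)=U_2\phi(U_1^*V)=R^{\#}\pi(V)R$, with the normalization clause handled by the same computation. Your added checks (unitality of $\pi$ so that $U$ is genuinely unitary, and invertibility of $U_2$ in the last step) are sensible refinements of details the paper leaves implicit, not a different argument.
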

		\begin{proof}
			Suppose a linear map $\psi$  is $(U_1,U_2)$-CP. Then with the help of Proposition \ref{prop1}, we get that $\phi$ defined by $\phi(V)=U^*_2 \psi(U_1V)$  is CP. Then using Stinespring's theorem \cite[Theorem 1]{St55}, there exist a Hilbert space $\m H,$ a representation (a unital $*$-homomorphism ) $\pi$ of $B(\m H_1)$ on the Hilbert space $\m H$ and a bounded linear operator $R:\m H_2\to \m H,$ such that $\phi(V)=R^* \pi(V)R$ for every $V\in B(\m H_1).$
			
			Let $U=\pi(U_1)\in B(\m H),$ where $U$ is a fundamental unitary, that is, $U^*=U^{-1},$ so that $(\m H,U)$ becomes an $S$-space. Define $R^{\#}:=U_2 R^* U^{*},$ then
			\begin{align*}
				\psi(V)=U_2\phi(U_1^{*}V)=U_2R^* \pi(U_1^{*}V)R=U_2R^* U^{*}\pi(V)R=	R^{\#}\pi(V)R.
			\end{align*}
			
			Furthermore, if $\psi(U_1)=U_2,$ then \begin{align*}
				U_2=\psi(U_1)=U_2\phi(U_1^{*}U_1)=U_2R^* \pi(U_1^{*}U_1)R=U_{2}R^*R,
			\end{align*}  hence $R^* R=I_{\m H_2}.$
		\end{proof}
		
		\begin{theorem}
			Suppose $\phi : B (\m H_1) \to  B (\m H_2) $ is a linear map. If $\phi$ satisfies the following conditions for all $V \in B (\m H_1):$ \begin{align*}
				\phi(U_{1}^{*}V)=U_{2}^{*}\phi(V)\quad\mbox{and}\quad\phi(U_{1}V)=U_{2}\phi(V),
			\end{align*} then $\phi$ is a CP map if and only if $\phi$ is $(U_{1},U_{2})$-CP.
		\end{theorem}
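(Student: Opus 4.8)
The plan is to reduce the statement to Proposition \ref{prop1} by showing that the two hypotheses force the auxiliary map $\psi$ of that proposition to coincide with $\phi$ itself. Recall that Proposition \ref{prop1} asserts that $\phi$ is CP if and only if the map $\psi(X) := U_2 \phi(U_1^{*} X)$ is $(U_1,U_2)$-CP. Thus, if I can establish $\psi = \phi$, the desired equivalence is immediate.

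First I would substitute directly into the definition of $\psi$. Taking the hypothesis $\phi(U_1^{*} V) = U_2^{*}\phi(V)$ with $V$ an arbitrary element $X \in B(\m H_1)$ gives
$$\psi(X) = U_2 \phi(U_1^{*} X) = U_2 U_2^{*} \phi(X) = \phi(X),$$
so that $\psi = \phi$ on all of $B(\m H_1)$. I would also note that the second hypothesis yields the same conclusion: since the definition of $\psi$ always gives $\psi(U_1 V) = U_2 \phi(U_1^{*} U_1 V) = U_2 \phi(V)$, the assumption $\phi(U_1 V) = U_2 \phi(V)$ forces $\psi(U_1 V) = \phi(U_1 V)$ for every $V$, and invertibility of $U_1$ then upgrades this to $\psi = \phi$. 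In fact the two hypotheses are equivalent, since replacing $V$ by $U_1 V$ in the first produces the second.

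Having shown $\psi = \phi$, I would invoke Proposition \ref{prop1}: $\phi$ is CP if and only if $\psi$ is $(U_1,U_2)$-CP, and substituting $\psi = \phi$ gives that $\phi$ is CP if and only if $\phi$ is $(U_1,U_2)$-CP, which is exactly the claim.

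The main (and essentially only) obstacle is spotting that the hypotheses are precisely what is needed to collapse $\psi$ onto $\phi$; once this is seen, all of the analytic content is already carried by Proposition \ref{prop1}, so no amplification-level computation or appeal to Stinespring's theorem needs to be redone. The only subtlety worth checking is that invertibility of $U_1$ legitimately transfers an identity verified on elements of the form $U_1 V$ to an identity on all of $B(\m H_1)$.
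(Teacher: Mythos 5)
Your proof is correct, but it takes a different route from the paper. The paper proves the theorem by a direct computation at the amplification level: it uses the hypothesis $\phi(U_1^*V)=U_2^*\phi(V)$ entrywise to establish $\phi^{l}(U_1^{l^*}V)=U_2^{l^*}\phi^{l}(V)$, and then runs essentially the same positivity argument as in the proof of Proposition \ref{prop1}, with $\phi$ playing the role that $\psi$ played there (and symmetrically for the converse). You instead observe that the hypothesis $\phi(U_1^*V)=U_2^*\phi(V)$ collapses the auxiliary map $\psi(X)=U_2\phi(U_1^*X)$ onto $\phi$ itself, so the theorem becomes a literal corollary of Proposition \ref{prop1}; your side remarks (that either hypothesis alone suffices via invertibility of $U_1$, and that the two hypotheses are equivalent to one another) are also correct. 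Your reduction is shorter, avoids repeating the matrix computation, and makes transparent that the theorem is the special case of Proposition \ref{prop1} in which $\psi=\phi$; the paper's direct argument is self-contained and exhibits explicitly how the intertwining relations propagate to the $l$-fold amplifications, but is otherwise redundant given Proposition \ref{prop1}. One caveat you inherit from the paper rather than introduce: the paper's Definition \ref{def1} formally requires a $(U_1,U_2)$-CP map to be $(U_1,U_2)$-Hermitian, a condition neither the paper's proof nor yours verifies; since Proposition \ref{prop1} is stated and proved under the same convention (positivity of amplifications only), your appeal to it is consistent with the paper's usage.
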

		
		\begin{proof}
			First assume  $\phi$ to be a CP map. Let $V=[V_{ij}] \in M_l(B(\m H_1))^{U+}.$ Observe that 
			\begin{align*}
				\phi^{l}(U_1^{l^{*}}V)&=[\phi(U_1^{{*}}V_{ij})]_{i,j}= \begin{pmatrix}\phi(U_{1}^{*}V_{11}) & \cdots  &  \phi(U_{1}^{*}V_{1l})\\ \vdots  & \ddots & \vdots \\\phi(U_{1}^{*}V_{l1}) &\cdots & \phi(U_{1}^{*}V_{ll}) \end{pmatrix}\\&=\begin{pmatrix}U_{2}^{*}  &  &  0\\  & \ddots & \\0 & & U_{2}^{*} \end{pmatrix} \begin{pmatrix}\phi(V_{11}) & \cdots  &  \phi(V_{ll})\\ \vdots  & \ddots & \vdots \\\phi(V_{l1}) &\cdots & \phi(V_{ll}) \end{pmatrix}=U_{2}^{l^{*}}\phi^{l}(V).
			\end{align*}Similarly, we obtain $	\phi^{l}(U_1^{l}V)=U_{2}^{l}\phi^{l}(V).$ Now consider \begin{align*}
				[\phi^{l}(V){\bf{h'}}, {\bf{h'}}]_{U_{2}^{l}}&=\langle \phi^{l}(V){\bf{h'}}, U_{2}^{l} {\bf{h'}} \rangle=\langle U_{2}^{l^{*}}\phi^{l}(V){\bf{h'}},  {\bf{h'}} \rangle\\&=\langle \phi^{l}(U_1^{l^{*}}V){\bf{h'}},  {\bf{h'}} \rangle \geq 0,
			\end{align*} where ${\bf{h'}}\in \mathcal{H}_2^{l}.$  Therefore $\langle U_{2}^{l^{*}}\phi^{l}(V){\bf{h'}},  {\bf{h'}} \rangle \geq 0,$ that is,  $U_{2}^{l^{*}}\phi^{l}(V)$ is positive  with respect to the usual inner product $\langle\cdot,\cdot\rangle.$  This proves that $\phi(V)$ is $U_{2}$-positive.  Thus $\phi$ is $(U_1, U_2)$-CP.

			Conversely, suppose that $\phi$ is $(U_1,U_2)$-CP.  Let $V=[V_{ij}] \in M_l(B(\m H_1))^{+}.$ Then we have to show that $\phi^{l}(V) =[\phi({V_{ij}})]\in M_l(B(\m H_2))^{+}.$ Since \begin{equation*}
				0 \leq	\langle V {\bf{h}},{\bf{h}} \rangle=\langle U_{1}^{l}V {\bf{h}}, U_{1}^{l}{\bf{h}} \rangle=[U_{1}^{l}V{\bf{h}},{\bf{h}}]_{U_{1}^{l}},
			\end{equation*} where ${\bf{h}}\in \mathcal{H_{1}}^{l},$ it means $U_{1}^{l}V\in M_l(B(\m H_1))^{U+}.$ Then \begin{align*}
				\langle \phi^{l}(V){\bf{h'}},{\bf{h'}} \rangle&=\langle U^{l}_2 \phi(V){\bf{h'}}, U^{l}_2{\bf{h'}} \rangle =\langle  \phi(U_1^{l}V){\bf{h'}}, U^{l}_2{\bf{h'}} \rangle\\&=[\phi(U_1^{l}V){\bf{h'}}, {\bf{h'}}]_{U_{2}^{l}}\geq0,
			\end{align*} where  ${\bf{h'}}\in \mathcal{H_{2}}^{l}$ and  the last inequality follows from the fact that $U_{1}^{l}V\in M_l(B(\m H_1))^{U+}$ and  $\phi$ is $(U_1,U_2)$-CP.
		\end{proof}
		
		\begin{remark} In particular, if $\m H_1=\m H_2 =\m H$ and $U_{1}=U_{2}=U,$ and if  a linear map $\phi : B (\m H) \to  B (\m H) $ satisfies $	\phi(U^{*}V)=U^{*}\phi(V)\mbox{ and } \phi(UV)=U\phi(V)$ for all $V \in B (\m H_1),$ then $\phi$ is CP if and only if $\phi$ is $U$-CP.
		\end{remark}
		
		\begin{definition}
			Let $(\m H_i, U_i)~(i=1,2)$ be an $S$-space. Assume that $\psi$ is a linear map  from $B(\m H_1)$ to $B(\m H_2).$ Then
			\begin{enumerate}
				\item for each $ l \in \mathbb{N},$ $\psi$ is {\rm $l$-$(U_1,U_2)$-co-positive} if $\tau_{l} \otimes \psi: M_l(\mathbb C)\otimes B(\m H_1) \to M_l(\mathbb C) \otimes B(\m H_2) $ is $(I_{l}\otimes U_1, I_{l}\otimes U_2)$-positive where $\tau_{l}$ is the transpose map on $M_l(\mathbb C).$
				\item $\psi$ is {\rm completely $(U_1,U_2)$-co-positive} if it is $l$-$(U_1,U_2)$-co-positive for each  $ l \in \mathbb{N}.$
				\item $\psi$ is {\rm $(U_1,U_2)$-positive partial transpose ($(U_1,U_2)$-PPT)} if it is $(U_1,U_2)$-CP and completely  $(U_1,U_2)$-co-positive.
				\item In particular, if $\m H_1=\m H_2 =\m H$ and $U_{1}=U_{2}=U,$ then we simply call it completely $U$-co-positive (and $U$-positive partial transpose ($U$-PPT)) if it is completely  $(U_1, U_2)$-co-positive (and $(U_1, U_2)$-positive partial transpose, respectively).
			\end{enumerate}
		\end{definition}

		\begin{proposition} \label{prop2}
			Let $(\m H_i, U_i)~(i=1,2)$ be an $S$-space. Suppose $\phi : B (\m H_1) \to  B (\m H_2) $ is a linear map, then $\phi$ is completely co-positive if and only if the corresponding linear map $\psi$  from $B(\m H_1)$ to $B(\m H_2)$ defined by $\psi(X):=U_2\phi(U_1^{*}X)$  is completely $(U_1,U_2)$-co-positive, where $X \in B(\m H_1).$
		\end{proposition}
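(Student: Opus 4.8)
The plan is to follow the proof of Proposition~\ref{prop1} almost verbatim, replacing the $l$-fold amplification $\phi^{l}=I_l\otimes\phi$ by the partial-transpose amplification $\tau_l\otimes\phi$. The only genuinely new ingredient is the transpose-compatible version of the identity established just before Proposition~\ref{prop1}, namely
\begin{equation*}
(\tau_l\otimes\psi)(V)=U_2^{l}\,(\tau_l\otimes\phi)(U_1^{l^{*}}V),\qquad V=[V_{ij}]\in M_l(B(\m H_1)).
\end{equation*}
First I would verify this entrywise: the $(i,j)$-entry of the left-hand side is $\psi(V_{ji})=U_2\phi(U_1^{*}V_{ji})$, while the $(i,j)$-entry of the right-hand side is $U_2\,\phi\big((U_1^{l^{*}}V)_{ji}\big)=U_2\phi(U_1^{*}V_{ji})$, because $U_2^{l}=\mathrm{diag}(U_2,\ldots,U_2)$ and $U_1^{l^{*}}=\mathrm{diag}(U_1^{*},\ldots,U_1^{*})$ act blockwise and $\tau_l$ merely interchanges the matrix indices. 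The same entrywise check, applied to $\phi(\cdot)=U_2^{*}\psi(U_1\cdot)$, yields the dual identity $(\tau_l\otimes\phi)(V)=U_2^{l^{*}}(\tau_l\otimes\psi)(U_1^{l}V)$.

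For the forward implication, assume $\phi$ is completely co-positive, so $\tau_l\otimes\phi$ sends positive operators to positive operators for every $l$. Given $V\in M_l(B(\m H_1))^{U+}$, i.e.\ $U_1^{l^{*}}V\geq 0$, I would compute, for ${\bf h'}\in\m H_2^{l}$,
\begin{equation*}
[(\tau_l\otimes\psi)(V){\bf h'},{\bf h'}]_{U_2^{l}}=\langle U_2^{l^{*}}(\tau_l\otimes\psi)(V){\bf h'},{\bf h'}\rangle=\langle(\tau_l\otimes\phi)(U_1^{l^{*}}V){\bf h'},{\bf h'}\rangle\geq 0,
\end{equation*}
where the second equality uses the key identity together with $U_2^{l^{*}}U_2^{l}=I$, and the inequality holds because $U_1^{l^{*}}V\geq 0$ and $\tau_l\otimes\phi$ is positive. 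Hence $(\tau_l\otimes\psi)(V)$ is $U_2^{l}$-positive, so $\psi$ is completely $(U_1,U_2)$-co-positive.

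For the converse, assume $\psi$ is completely $(U_1,U_2)$-co-positive and take $V\in M_l(B(\m H_1))^{+}$. Exactly as in Proposition~\ref{prop1}, positivity of $V$ gives $U_1^{l}V\in M_l(B(\m H_1))^{U+}$ since $[U_1^{l}V{\bf h},{\bf h}]_{U_1^{l}}=\langle V{\bf h},{\bf h}\rangle\geq 0$. Using the dual identity, for ${\bf h'}\in\m H_2^{l}$,
\begin{equation*}
\langle(\tau_l\otimes\phi)(V){\bf h'},{\bf h'}\rangle=[(\tau_l\otimes\psi)(U_1^{l}V){\bf h'},{\bf h'}]_{U_2^{l}}\geq 0,
\end{equation*}
the inequality following because $U_1^{l}V\in M_l(B(\m H_1))^{U+}$ and $\tau_l\otimes\psi$ is $(I_l\otimes U_1,I_l\otimes U_2)$-positive. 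Thus $(\tau_l\otimes\phi)(V)\geq 0$ and $\phi$ is completely co-positive.

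I expect no real obstacle here; the single point requiring care is that the transpose $\tau_l$ operates only on the matrix indices, so it commutes with the blockwise left and right multiplications by $U_2^{l}$ and $U_1^{l^{*}}$. This commutation is precisely what makes the two identities above hold and lets the argument of Proposition~\ref{prop1} transfer unchanged, with $\phi^{l}$ replaced throughout by $\tau_l\otimes\phi$.
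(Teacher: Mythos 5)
Your proposal is correct and follows essentially the same route as the paper: both establish the blockwise identities $(\tau_l\otimes\psi)(V)=(I_l\otimes U_2)(\tau_l\otimes\phi)\bigl((I_l\otimes U_1^{*})V\bigr)$ and $(\tau_l\otimes\phi)(V)=(I_l\otimes U_2^{*})(\tau_l\otimes\psi)\bigl((I_l\otimes U_1)V\bigr)$, and then transfer positivity in each direction exactly as in Proposition \ref{prop1}. The only cosmetic difference is that you verify the identities entrywise and phrase positivity via the inner products $[\cdot,\cdot]_{U_2^l}$, while the paper writes out the block matrices explicitly; the mathematical content is identical.
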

		
		\begin{proof}
			Let $V=[V_{ij}]\in M_l(\mathbb C)\otimes B(\m H_1)$ be such that $(I_{l}\otimes U_{1}^{*})V \geq 0 .$ Then \begin{align*}
				(\tau_{l}	\otimes\psi)(V)&=\begin{pmatrix}\psi(V_{11}) & \cdots  & \psi(V_{l1})\\ \vdots  & \ddots & \vdots \\\psi(V_{1l}) &\cdots & \psi(V_{ll}) \end{pmatrix}=\begin{pmatrix}U_2\phi(U_1^{*}V_{11}) & \cdots  & U_2\phi(U_1^{*}V_{l1})\\ \vdots  & \ddots & \vdots \\U_2\phi(U_1^{*}V_{1l}) &\cdots & U_2\phi(U_1^{*}V_{ll}) \end{pmatrix}\\&=\begin{pmatrix}U_{2}  &  &  0\\  & \ddots & \\0 & & U_{2} \end{pmatrix} \begin{pmatrix}\phi(U_1^{*}V_{11}) & \cdots  & \phi(U_1^{*}V_{l1})\\ \vdots  & \ddots & \vdots \\\phi(U_1^{*}V_{1l}) &\cdots & \phi(U_1^{*}V_{ll}) \end{pmatrix}\\&=(I_{l}\otimes U_{2})(\tau_{l} \otimes \phi)(I_{l}\otimes U_{1}^{*})V.
			\end{align*} Hence $(I_{l}\otimes U_{2}^{*})(\tau_{l}	\otimes\psi)(V)$ is positive as $\phi$ is completely co-positive map.
			
			Conversely,  for any  $V=[V_{ij}] \in M_l(B(\m H_1) ,$ we have \begin{equation*}
				0 \leq	\langle V {\bf{h}},{\bf{h}} \rangle=\langle U_{1}^{l}V {\bf{h}}, U_{1}^{l}{\bf{h}} \rangle=[U_{1}^{l}V{\bf{h}},{\bf{h}}]_{U_{1}^{l}},
			\end{equation*} where ${\bf{h}}\in \mathcal{H_{1}}^{l},$ it means $U_{1}^{l}V\in M_l(B(\m H_1))^{U+}.$  We obtain 
			
			\begin{align*}
				(\tau_{l} \otimes \phi)(V)&=\begin{pmatrix}\phi(V_{11}) & \cdots  & \phi(V_{l1})\\ \vdots  & \ddots & \vdots \\\phi(V_{1l}) &\cdots & \phi(V_{ll}) \end{pmatrix}=\begin{pmatrix}U_2^{*}\psi(U_1V_{11}) & \cdots  & U_2^{*}\psi(U_1V_{l1})\\ \vdots  & \ddots & \vdots \\U_2^{*}\psi(U_1V_{1l}) &\cdots & U_2^{*}\psi(U_1V_{ll}) \end{pmatrix}\\&=\begin{pmatrix}U_{2}^{*}  &  &  0\\  & \ddots & \\0 & & U_{2}^{*} \end{pmatrix} \begin{pmatrix}\psi(U_1V_{11}) & \cdots  & \psi(U_1V_{l1})\\ \vdots  & \ddots & \vdots \\\psi(U_1V_{1l}) &\cdots & \psi(U_1V_{ll}) \end{pmatrix}\\&=U_{2}^{l^{*}}(\tau_{l} \otimes \psi)(U_1^{l}V).
			\end{align*} Therefore $	(\tau_{l} \otimes \phi)(V)=U_{2}^{l^{*}}(\tau_{l} \otimes \psi)(U_1^{l}V).$ Since $U_{1}^{l}V\in M_l(B(\m H_1))^{U+}$ and $\psi$ is completely $(U_1,U_2)$-co-positive, $\phi$ is co-positive.
			
		\end{proof}

		\section{Kraus $U$-decomposition and Choi $U$-matrix}\label{sec3}In this section, we derive Kraus $U$-decomposition and Choi $U$-matrix and establish their relation with the completely $U$-positive maps.
		Let $M_{m}(\mathbb{C})$ denote the set of all $ m \times m$-complex matrices. Kraus proved that $\phi : M_{m}(\mathbb{C}) \to M_{n}(\mathbb{C})$ is a CP map if and only if \begin{equation}\label{kraus1}
			\phi(V)=\sum_{i=1}^{l}R_{i}^{*}VR_{i}, 
		\end{equation} where $V=[V_{ij}]_{i,j} \in M_{m}(\mathbb{C})$ and for each $i,$ $R_{i} \in M_{m,n}(\mathbb{C}).$ The expression in above equation is called a Kraus decomposition.

		Denote $M_{A}:=M_{m}(\mathbb{C})$ and $M_{B}:=M_{n}(\mathbb{C}).$  Let  $U_{A}$ and $U_{B}$ be the fundamental unitaries in $M_{A}$ and $ M_{B},$ respectively. Define a linear map $
		\psi: M_{A} \to M_{B}$ by  \begin{equation}\label{kraus}\psi(V):=\sum_{i=1}^{l}R_{i}^{\#_{A,B}}VR_{i},
		\end{equation} where $R_{i}^{\#_{A,B}}= U_B R^*_i U_{A}^{*}.$	Then $\psi$ is $(U_{A},U_{B})$-CP.	Indeed, for any $k\in \mathbb{N},$ take a $U_{A}^{k^{*}}$-positive matrix $V=[V_{ij}]\in M_{k}(M_{A})^{U+}.$  Since $V=[V_{ij}]\in M_{k}(M_{A})^{U+},$ $U_{A}^{k^{*}}V \in M_{k}(M_{A})^{+},$ that is, \begin{align*}
			U_{A}^{k^{*}}V&=\begin{pmatrix}U_{A}^{*}  &  &  0\\  & \ddots & \\0 & & U_{A}^{*} \end{pmatrix} \begin{pmatrix}V_{11} & \cdots  &  V_{1k}\\ \vdots  & \ddots & \vdots \\V_{k1} &\cdots & V_{kk} \end{pmatrix}\\& = \begin{pmatrix}U_{A}^{*}V_{11} & \cdots  &  U_{A}^{*}V_{1k}\\ \vdots  & \ddots & \vdots \\U_{A}^{*}V_{k1} &\cdots & U_{A}^{*}V_{kk} \end{pmatrix} \in M_{k}(M_{A})^{+}.
		\end{align*}
		
		Consider \begin{align*}
			\psi^{k}(V)&=\psi^{k}\begin{pmatrix}V_{11} & \cdots  &  V_{1k}\\ \vdots  & \ddots & \vdots \\V_{k1} &\cdots & V_{kk} \end{pmatrix}=\begin{pmatrix}\psi(V_{11}) & \cdots  &  \psi(V_{1k})\\ \vdots  & \ddots & \vdots \\ \psi(V_{k1}) &\cdots & \psi(V_{kk} )\end{pmatrix}\\&=\sum_{i=1}^{l}\begin{pmatrix} R_{i}^{\#_{A,B}}V_{11}R_{i} & \cdots  &  R_{i}^{\#_{A,B}}V_{1k}R_{i}\\ \vdots  & \ddots & \vdots \\ R_{i}^{\#_{A,B}}V_{k1}R_{i} &\cdots & R_{i}^{\#_{A,B}}V_{kk}R_{i}\end{pmatrix}\\&=\sum_{i=1}^{l}\begin{pmatrix} U_B R^*_i U_{A}^{*}V_{11}R_{i} & \cdots  &  U_B R^*_i U_{A}^{*}V_{1k}R_{i}\\ \vdots  & \ddots & \vdots \\ U_B R^*_i U_{A}^{*}V_{k1}R_{i} &\cdots & U_B R^*_i U_{A}^{*}V_{kk}R_{i}\end{pmatrix}\\&=\sum_{i=1}^{l}\begin{pmatrix}U_{B}  &  &  0\\  & \ddots & \\0 & & U_{B} \end{pmatrix}\begin{pmatrix}R_{i}^{*} &  &  0\\  & \ddots & \\0 & & R_{i}^{*} \end{pmatrix}	U_{A}^{k^{*}}V \begin{pmatrix}R_{i} &  &  0\\  & \ddots & \\0 & & R_{i} \end{pmatrix}\\&=U_{B}^{k}\sum_{i=1}^{l} \begin{pmatrix}R_{i}^{*} &  &  0\\  & \ddots & \\0 & & R_{i}^{*} \end{pmatrix}	U_{A}^{k^{*}}V \begin{pmatrix}R_{i} &  &  0\\  & \ddots & \\0 & & R_{i} \end{pmatrix},
		\end{align*} and since $U_{A}^{k^{*}}V \in M_{k}(M_{A})^{+},$  by using the Kraus decomposition $$\sum_{i=1}^{l}R_{i}^{*^{k}}U_{A}^{k^{*}}VR_{i}^{k} \in M_{k}(M_{A})^{+},$$ we obtain $U_{B}^{k^{*}}\psi^{k}(V) \geq 0.$ Hence $\psi^{k}(V)$ is a $U_{B}$-positive matrix, that is,  $\psi$ is $(U_{A},U_{B})$-CP map.

		\begin{theorem}
			Let  $U_{A}$ and $U_{B}$ be the fundamental unitaries in $M_{A}$ and $ M_{B},$ respectively.  A linear map $
			\psi: M_{A} \to M_{B}$ is a $(U_{A},U_{B})$-CP map if and only if it has a decomposition of the form (\ref{kraus}).
		\end{theorem}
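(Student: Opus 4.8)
The forward implication is precisely the content of the matrix computation carried out immediately before the statement, where it was verified that any $\psi$ of the form (\ref{kraus}) sends $U_A^{k^*}$-positive matrices to $U_B$-positive ones. So only the converse requires argument, and the plan is to reduce the $(U_A,U_B)$-CP map $\psi$ to an ordinary CP map and then invoke the classical Kraus decomposition (\ref{kraus1}).

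First I would apply Proposition \ref{prop1} with $\m H_1=\mathbb{C}^m$, $\m H_2=\mathbb{C}^n$, $U_1=U_A$ and $U_2=U_B$. Given that $\psi$ is $(U_A,U_B)$-CP, I define $\phi:M_A\to M_B$ by the inverse relation $\phi(V)=U_B^*\psi(U_A V)$, which is exactly the $\phi$ for which $\psi(X)=U_B\phi(U_A^*X)$. By the reverse direction of Proposition \ref{prop1}, this $\phi$ is completely positive in the usual sense. Next I would apply Kraus's theorem to $\phi$: there exist matrices $R_1,\dots,R_l\in M_{m,n}(\mathbb{C})$ with $\phi(V)=\sum_{i=1}^{l}R_i^*VR_i$ for every $V\in M_A$. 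Substituting this into $\psi(V)=U_B\phi(U_A^*V)$ yields
\begin{equation*}
\psi(V)=U_B\sum_{i=1}^{l}R_i^*U_A^*VR_i=\sum_{i=1}^{l}\bigl(U_BR_i^*U_A^*\bigr)VR_i=\sum_{i=1}^{l}R_i^{\#_{A,B}}VR_i,
\end{equation*}
which is exactly the decomposition (\ref{kraus}), since $R_i^{\#_{A,B}}=U_BR_i^*U_A^*$.

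I expect no serious obstacle here, as the proof is a clean reduction through Proposition \ref{prop1} followed by the classical Kraus theorem. The only point demanding genuine care is the bookkeeping in passing between $\psi$ and $\phi$: one must use the correct inverse relation $\phi(V)=U_B^*\psi(U_A V)$ and then check that the surviving unitary factors recombine into the $U$-adjoint pattern $R_i^{\#_{A,B}}=U_BR_i^*U_A^*$ rather than attaching to the wrong side of the product. Once the substitution is arranged so that $U_B$ and $U_A^*$ land on the left of $R_i^*$, the identification with (\ref{kraus}) is immediate.
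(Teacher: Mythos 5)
Your proposal is correct and follows essentially the same route as the paper: the forward direction is the computation preceding the theorem, and for the converse the paper likewise passes to $\phi(V)=U_B^*\psi(U_AV)$ (which is CP by Proposition \ref{prop1}), applies the classical Kraus decomposition, and substitutes back to obtain $\psi(V)=\sum_{i=1}^{l}U_BR_i^*U_A^*VR_i=\sum_{i=1}^{l}R_i^{\#_{A,B}}VR_i$. Your bookkeeping of the unitary factors matches the paper's exactly.
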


		\begin{proof}
			Assume that $\psi$ is a $(U_{A},U_{B})$-CP map. Since a linear map $\phi: M_{A}\to M_{B}$ defined by
			$\phi(V)=U^*_B \psi(U_AV)$  is CP, $\phi$ has a Kraus  decomposition, that is,\begin{equation*}
				\phi(V)=\sum_{i=1}^{l}R_{i}^{*}VR_{i}, 
			\end{equation*} where $V \in M_{m}(\mathbb{C})$ and for each $i,$ $R_{i} \in M_{m,n}(\mathbb{C}).$ Thus we have \begin{align*}
				\psi(V)=U_B\phi(U_A^{*}V)=U_B\sum_{i=1}^{l}R_{i}^{*}U_A^{*}VR_{i}=\sum_{i=1}^{l}U_BR_{i}^{*}U_A^{*}VR_{i}=\sum_{i=1}^{l}R_{i}^{\#}VR_{i}.
			\end{align*} Therefore $\psi$ is a $(U_{A},U_{B})$-CP map if and only if $\psi$ has the expression $\psi(V)=\sum_{i=1}^{l}R_{i}^{\#}VR_{i},$ we call $\psi$ has a {\it Kraus $U$-decomposition} in this case.
		\end{proof} Suppose	 $\{e_{ij}\:\:|\:\: 1 \leq i,j \leq m\}$ are the matrix units of $M_{m}(\mathbb{C}).$ We observe that $D=[U_{A}e_{ij}]_{1\leq i,j \leq m}$ is $I_{m}\otimes U_{A}$-positive. Indeed,\begin{align*}
			(I_{m}\otimes U_{A}^{*})D &=\begin{pmatrix}U_{A}^{*}  &  &  0\\  & \ddots & \\0 & & U_{A}^{*} \end{pmatrix} \begin{pmatrix}U_{A}e_{11} & \cdots  &  U_{A}e_{1m}\\ \vdots  & \ddots & \vdots \\U_{A}e_{m1} &\cdots & U_{A}e_{mm} \end{pmatrix}\\&=\begin{pmatrix}e_{11} & \cdots  &  e_{1m}\\ \vdots  & \ddots & \vdots \\e_{m1} &\cdots & e_{mm} \end{pmatrix}\in M_{m^{2}}^{+}(\mathbb{C}).
		\end{align*}It implies from the above proposition that $[\psi (U_{A}e_{ij})]_{1\leq i,j \leq m}$ is $I_{m}\otimes U_{B}$-positive.
		\begin{theorem}
			Let   $\psi: M_{A} \to M_{B}$  be a  linear map.	Then $\psi$ is $(U_{A},U_{B})$-CP if and only if $[U_{B}^{*}\psi (U_{A}e_{ij})]_{1\leq i,j \leq m}$ is positive.
		\end{theorem}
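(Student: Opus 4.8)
The plan is to transfer the problem to the classical Choi theorem through the correspondence established in Proposition \ref{prop1}. Given the linear map $\psi : M_A \to M_B$, I would introduce the ordinary linear map $\phi : M_A \to M_B$ defined by $\phi(V) := U_B^{*}\psi(U_A V)$, so that equivalently $\psi(X) = U_B \phi(U_A^{*}X)$. By Proposition \ref{prop1} applied with $U_1 = U_A$ and $U_2 = U_B$, the map $\psi$ is $(U_A, U_B)$-CP if and only if $\phi$ is CP in the usual sense. This single reduction is the engine of the whole argument.

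Next I would invoke Choi's theorem (cf. \cite{choi}) for completely positive maps between matrix algebras: the map $\phi$ is CP if and only if its Choi matrix $[\phi(e_{ij})]_{1 \le i,j \le m}$ is positive in $M_m(M_B)$. A direct computation gives $\phi(e_{ij}) = U_B^{*}\psi(U_A e_{ij})$, so the Choi matrix of $\phi$ is exactly $[U_B^{*}\psi(U_A e_{ij})]_{1 \le i,j \le m}$. Chaining the two equivalences, $\psi$ is $(U_A, U_B)$-CP if and only if $[U_B^{*}\psi(U_A e_{ij})]_{1 \le i,j \le m}$ is positive, which is the asserted characterization. I would be careful here to read the displayed matrix as the genuine Choi matrix of $\phi$, rather than of $\psi$, since it is the positivity of $\phi$ that Choi's theorem governs.

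The forward implication has in fact already been prepared in the discussion immediately preceding the statement: since $D = [U_A e_{ij}]_{1\le i,j\le m}$ is $I_m \otimes U_A$-positive, $(U_A, U_B)$-complete positivity of $\psi$ forces $[\psi(U_A e_{ij})]_{1\le i,j\le m}$ to be $I_m \otimes U_B$-positive, and multiplying by $I_m \otimes U_B^{*}$ yields positivity of $[U_B^{*}\psi(U_A e_{ij})]$. Thus the substantive content is the converse, namely recovering full $(U_A,U_B)$-complete positivity of $\psi$ from positivity of a \emph{single} Choi $U$-matrix; this is precisely what is delivered by running Choi's theorem backwards through $\phi$. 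I do not anticipate any analytic obstacle beyond the bookkeeping of the placement of $U_A, U_A^{*}, U_B$ and $U_B^{*}$ when passing between $\psi$ and $\phi$, together with a careful identification of $M_{m^2}(\mathbb{C})$ with $M_m(M_B)$.
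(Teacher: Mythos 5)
Your proposal is correct and follows exactly the route the paper intends: the paper's proof is the one-line citation of Choi's theorem, which implicitly rests on the same reduction $\phi(V) := U_B^{*}\psi(U_A V)$ via Proposition \ref{prop1} that you spell out (and which the paper itself makes explicit in the proof of the subsequent Choi $U$-matrix theorem). Your version simply fills in the bookkeeping that the paper leaves to the reader.
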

		\begin{proof}
			The proof directly  follows from \cite[Theorem 2]{choi}.
		\end{proof}
		
		Let  $\phi : M_{m}(\mathbb{C}) \to M_{n}(\mathbb{C})$  be  a linear  map. Choi \cite{choi} defined  $C_{\phi}=\sum_{i,j=1}^{m} e_{ij}\otimes \phi(e_{ij}),$ called  the {\it Choi matrix}, and proved that it is positive if and only if $\phi$ is a CP map.
		\begin{definition}
			Let  $\psi : M_{m}(\mathbb{C}) \to M_{n}(\mathbb{C})$  be  a linear  map. We define $C_{\psi}^{U}:=\sum_{i,j=1}^{m} e_{ij}\otimes \psi(U_Ae_{ij}).$ The matrix $C_{\psi}^{U}$ is called the {\rm Choi $U$-matrix}.
		\end{definition} 
		\begin{theorem}
			Let  $U_{A}$ and $U_{B}$ be the fundamental unitaries in $M_{A}$ and $ M_{B},$ respectively, where  $M_{A}=M_{m}(\mathbb{C})$ and $M_{B}=M_{n}(\mathbb{C}).$  Then a linear map  $\psi: M_{A} \to M_{B}$ is a  $(U_{A},U_{B})$-CP map if and only if $C_{\psi}^{U}$ is $I_{A} \otimes U_{B}$-positive in $M_{A}\otimes M_{B}.$
		\end{theorem}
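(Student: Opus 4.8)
The plan is to reduce the statement to the classical Choi criterion via the correspondence already established in Proposition \ref{prop1}. First I would set $\phi(V) := U_B^* \psi(U_A V)$, so that by Proposition \ref{prop1} the map $\psi$ is $(U_A, U_B)$-CP if and only if $\phi$ is CP. The whole argument then hinges on relating the ordinary Choi matrix $C_\phi = \sum_{i,j} e_{ij} \otimes \phi(e_{ij})$ to the Choi $U$-matrix $C_\psi^U = \sum_{i,j} e_{ij}\otimes \psi(U_A e_{ij})$.

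Second, I would carry out this comparison directly. Since $\phi(e_{ij}) = U_B^* \psi(U_A e_{ij})$, pulling the constant operator $U_B^*$ out of the sum gives
\[
C_\phi = \sum_{i,j=1}^{m} e_{ij} \otimes U_B^* \psi(U_A e_{ij}) = (I_A \otimes U_B^*) \sum_{i,j=1}^{m} e_{ij}\otimes \psi(U_A e_{ij}) = (I_A \otimes U_B^*)\, C_\psi^U.
\]

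Third, I would invoke Choi's theorem \cite{choi}: $\phi$ is CP if and only if $C_\phi$ is positive with respect to the usual inner product. By the identity above, this positivity reads $(I_A\otimes U_B^*) C_\psi^U \geq 0$. Recalling the definition of $U$-positivity, namely that an operator $W$ is $(I_A \otimes U_B)$-positive precisely when $(I_A\otimes U_B)^* W = (I_A\otimes U_B^*)W$ is positive, this is exactly the assertion that $C_\psi^U$ is $I_A \otimes U_B$-positive. Chaining the three equivalences, $\psi$ is $(U_A,U_B)$-CP $\iff$ $\phi$ is CP $\iff$ $C_\phi \geq 0$ $\iff$ $C_\psi^U$ is $I_A\otimes U_B$-positive, closes the proof in both directions simultaneously.

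I do not anticipate a genuine obstacle here; the content is essentially bookkeeping, and the only points demanding care are the placement of the conjugation (whether $U_B^*$ or $U_B$ appears, and on which tensor leg) together with the check that the unitary $U_A$ inserted into the definition of $C_\psi^U$ is precisely what makes $D = [U_A e_{ij}]$ an $I_m \otimes U_A$-positive element, matching the slot into which $\psi$ is fed. I would verify the tensor-leg convention against the amplification $\phi^l$ used earlier, so that $I_A \otimes U_B^*$ (rather than $U_B^* \otimes I_A$) is the operator implementing positivity, and note that the intermediate observation on $D$ already recorded just before the statement makes this consistency transparent.
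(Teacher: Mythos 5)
Your proposal is correct and follows essentially the same route as the paper: both reduce the statement to the classical Choi criterion by setting $\phi(V)=U_B^{*}\psi(U_A V)$, invoking Proposition \ref{prop1}, and identifying $C_\phi$ with $(I_A\otimes U_B^{*})C_{\psi}^{U}$. The only cosmetic difference is that you establish this identity directly as an operator equation by pulling $U_B^{*}$ out of the tensor sum, whereas the paper verifies the same relation entrywise via a sesquilinear-form computation $[C_{\psi}^{U}{\bf h},{\bf h'}]_{U_B^{m}}=\langle C_\phi{\bf h},{\bf h'}\rangle$.
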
\begin{proof}
			Let $\phi : M_{A} \to M_{B}$ be the linear map defined by  $\phi(V):=U^*_B \psi(U_AV)$  where $V\in M_{A}.$ Then by Proposition \ref{prop1}, $\phi$ is CP if and only if $\psi$ is a $(U_{A},U_{B})$-CP  map. It is known from \cite{choi}  that $\phi$ is CP  if and only if $C_{\phi}$ is positive semi-definite.
			Since, for any ${\bf{h}},{\bf{h'}} \in \mathbb{C}^{mn},$ we have \begin{align*}
				[C_{\psi}^{U}{\bf{h}},{\bf{h'}}]_{U_{B}^{m}}&=\langle C_{\psi}^{U}{\bf{h}},{U_{B}^{m}}{\bf{h'}} \rangle=\langle {U_{B}^{m^{*}}} C_{\psi}^{U}{\bf{h}},{\bf{h'}} \rangle\\&=\langle \begin{pmatrix}U_{B}^{*}\psi(U_{A}e_{11}) & \cdots  &  U_{B}^{*}\psi(U_{A}e_{1m})\\ \vdots  & \ddots & \vdots \\U_{B}^{*}\psi(U_{A}e_{m1}) &\cdots & U_{B}^{*}\psi(U_{A}e_{mm}) \end{pmatrix}{\bf{h}},{\bf{h'}} \rangle\\& =\langle \begin{pmatrix}\phi(e_{11}) & \cdots  &  \phi(e_{1m})\\ \vdots  & \ddots & \vdots \\\phi(e_{1m}) &\cdots & \phi(e_{mm}) \end{pmatrix}{\bf{h}},{\bf{h'}} \rangle\\&=\langle C_{\phi} {\bf{h}},{\bf{h'}} \rangle,
			\end{align*} that is, $C_{\phi}$ is positive if and only if $C_{\psi}^{U}$ is $I_{A} \otimes U_{B}$-positive in $M_{A}\otimes M_{B},$ which completes the proof. 
		\end{proof}
		\section{Nilpotent  $U$-CP maps}\label{new}
		Nilpotent CP maps were studied by Bhat and Mallick in \cite{bhat14}. Let ${\m H}$ be a finite dimensional Hilbert space and  $\phi : B (\m H) \to  B (\m H) $ be a CP map. Suppose $\phi$ is a nilpotent map of order $p,$ that is, $\phi^{p}=0$ and $\phi^{p-1}\neq 0.$ Define ${\m H}_1:=\ker(\phi(U))$ and ${\m H}_k:=\ker(\phi^{k}(U))\ominus \ker(\phi^{k-1}(U)),$ where $ 2 \leq k\leq p.$ Then $\cap_{k=1}^{p}{\m H}_{k}=\emptyset$ and $\m H= {\m H}_1 \oplus {\m H}_2\oplus \cdots \oplus {\m H}_p.$	Let $b_i:=\dim({\m H}_i)$ for $1 \leq i \leq p.$ Then $(b_1,b_2, \ldots, b_p)$ is called the {\rm CP nilpotent type of $\phi.$} In this section, we introduce  $U$-CP nilpotent type of $U$-CP maps.
		
		\begin{proposition}Let ${\m H}$ be a finite dimensional Hilbert space and $(\m H, U)$ be an $S$-space with the indefinite inner product $[\cdot,\cdot]_{U}.$ Suppose $\phi : B (\m H) \to  B (\m H) $ is a CP map, then  the corresponding linear map $\psi$  from $B(\m H)$ to $B(\m H)$ defined by $\psi(X):=U\phi(U^{*}X)$  is $U$-CP, with the  Kraus $U$-decomposition $\psi(X)=\sum_{i=1}^{l}R_{i}^{\#}XR_{i},$ where $X \in B(\m H)$ and $ R_{i}^{\#}=UR_{i}^{*}U^{*}$ for each $1\leq i \leq l.$ Then 
			\begin{enumerate}
				\item $\ker(\psi(U))=\cap_{i=1}^{l}\ker(UR_i),$
				\item For $U$-positive $X,$ $\psi(X)=0$ if and only if $\text{ran}(X)\subseteq\cap_{i=1}^{l}\ker(R_i^{*}U^{*}),$
				\item $\{h \in{\m H}\:\:|\:\: \psi(|Uh \rangle\langle h|  )=0\}=\cap_{i=1}^{l}\ker(R_i^{*}U^{*}),$
				\item $\text{ran}(\psi(U))=\overline{span}\{UR_{i}^{*}h\:\:|\:\: h \in {\m H},\:\: 1 \leq i \leq l\}.$
			\end{enumerate}
		\end{proposition}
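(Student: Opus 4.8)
The plan is to reduce every assertion to the single identity $\psi(X)=\sum_{i=1}^{l}UR_{i}^{*}U^{*}XR_{i}$ coming from the Kraus $U$-decomposition, and then to exploit two things: that $U$ is invertible, so it can be cancelled from any kernel or range; and two standard finite-dimensional facts, namely $\ker\big(\sum_{i}R_{i}^{*}R_{i}\big)=\bigcap_{i}\ker R_{i}$ and $\operatorname{ran}T=(\ker T)^{\perp}$ for self-adjoint $T$. The computational pivot is $\psi(U)=\sum_{i}UR_{i}^{*}U^{*}UR_{i}=U\sum_{i}R_{i}^{*}R_{i}=UP$, where $P:=\sum_{i}R_{i}^{*}R_{i}\ge 0$.

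I would first settle (1) and (4), as both only see $\psi(U)=UP$. For (1), injectivity of $U$ gives $\ker\psi(U)=\ker(UP)=\ker P=\bigcap_{i}\ker R_{i}=\bigcap_{i}\ker(UR_{i})$, the last step using injectivity of $U$ once more. For (4), the bijection $U$ carries ranges to ranges, so $\operatorname{ran}\psi(U)=U\operatorname{ran}P$; since $P$ is self-adjoint, $\operatorname{ran}P=(\ker P)^{\perp}=\big(\bigcap_{i}\ker R_{i}\big)^{\perp}=\overline{\operatorname{span}}\{R_{i}^{*}h:h\in\mathcal H,\ 1\le i\le l\}$, using $\ker R_{i}=(\operatorname{ran}R_{i}^{*})^{\perp}$. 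Applying $U$ then yields $\operatorname{ran}\psi(U)=\overline{\operatorname{span}}\{UR_{i}^{*}h\}$, which is (4).

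For (2) the key is to convert the $U$-positivity of $X$ into honest positivity: set $Y:=U^{*}X\ge 0$, so that $X=UY$ and $\psi(X)=U\sum_{i}R_{i}^{*}YR_{i}$. As $U$ is invertible, $\psi(X)=0$ iff $\sum_{i}R_{i}^{*}YR_{i}=0$; writing $Y=(Y^{1/2})^{2}$ makes each summand $(Y^{1/2}R_{i})^{*}(Y^{1/2}R_{i})\ge 0$, so the sum vanishes iff $Y^{1/2}R_{i}=0$, equivalently $R_{i}^{*}Y^{1/2}=0$, for every $i$; that is, $\operatorname{ran}Y=\operatorname{ran}Y^{1/2}\subseteq\ker R_{i}^{*}$ for all $i$ (finite-dimensionality gives $\operatorname{ran}Y=\operatorname{ran}Y^{1/2}$). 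It remains to transport this to $X$ using $\operatorname{ran}X=U\operatorname{ran}Y$ and $\ker(R_{i}^{*}U^{*})=U\ker R_{i}^{*}$, which turns $\operatorname{ran}Y\subseteq\bigcap_{i}\ker R_{i}^{*}$ into $\operatorname{ran}X\subseteq\bigcap_{i}\ker(R_{i}^{*}U^{*})$. I expect this last bookkeeping of conjugations by $U$---verifying $\ker(R_{i}^{*}U^{*})=U\ker R_{i}^{*}$ and $\operatorname{ran}X=U\operatorname{ran}Y$---to be the only genuinely delicate point, everything else being cancellation of the unitary or elementary positivity.

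Finally, (3) falls out of (2) applied to the rank-one operator $X=|Uh\rangle\langle h|$: it is $U$-positive since $U^{*}X=|h\rangle\langle h|\ge 0$, and $\operatorname{ran}X=\operatorname{span}\{Uh\}$. Hence (2) gives $\psi(|Uh\rangle\langle h|)=0$ iff $Uh\in\bigcap_{i}\ker(R_{i}^{*}U^{*})$; unravelling $R_{i}^{*}U^{*}(Uh)=R_{i}^{*}h$ shows this holds precisely when $R_{i}^{*}h=0$ for every $i$, identifying the set in question as $\bigcap_{i}\ker R_{i}^{*}$. The same conclusion is visible from the direct evaluation $\psi(|Uh\rangle\langle h|)=U\sum_{i}|R_{i}^{*}h\rangle\langle R_{i}^{*}h|$, which vanishes exactly when each $R_{i}^{*}h=0$. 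No new difficulty arises here beyond the $U$-positivity check and the range of a rank-one operator.
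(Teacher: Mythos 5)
Your treatments of (1), (2) and (4) are correct and, in substance, the same as the paper's: where the paper routes each computation through the indefinite form $[\cdot,\cdot]_U$ (reducing (1) to $\sum_i\|UR_ih\|^2=0$, and proving (4) by an orthogonality argument against $\ker\bigl(\sum_i R_i^*R_iU^*\bigr)$), you simply write $\psi(U)=U\sum_i R_i^*R_i$ and cancel the unitary; your versions are tidier but involve no new idea. Your square-root trick in (2) --- each $R_i^*YR_i=(Y^{1/2}R_i)^*(Y^{1/2}R_i)$, so the sum vanishes iff every $Y^{1/2}R_i=0$ --- is exactly the paper's argument with $Y=U^*X$.

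Item (3) is where you genuinely diverge from the paper, and you should have flagged it. Your computation is correct: $\psi(|Uh\rangle\langle h|)=U\sum_i|R_i^*h\rangle\langle R_i^*h|$, so the set in question equals $\bigcap_i\ker R_i^*$. But that is \emph{not} what statement (3) asserts: it claims the set is $\bigcap_i\ker(R_i^*U^*)=U\bigl(\bigcap_i\ker R_i^*\bigr)$, and the two coincide only when $\bigcap_i\ker R_i^*$ is $U$-invariant. (Take $l=1$, $R_1$ the orthogonal projection onto $\mathbb{C}e_1$ in $\mathbb{C}^2$, and $U$ a rotation by $\pi/4$: then $\bigcap_i\ker R_i^*=\mathbb{C}e_2$ while $\bigcap_i\ker(R_i^*U^*)=\mathbb{C}Ue_2\neq\mathbb{C}e_2$.) The discrepancy traces to an error in the paper's own proof: it feeds ${\rm ran}(|Uh\rangle\langle h|)=\mathbb{C}h$ into item (2), whereas the range of $|Uh\rangle\langle h|$ is $\mathbb{C}Uh$; using the correct range, (2) produces precisely your answer. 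So the published statement (3) is false as written and your argument in fact corrects it --- there is no gap in your mathematics, but by presenting $\bigcap_i\ker R_i^*$ as ``the set in question'' without noting that it contradicts the claimed $\bigcap_i\ker(R_i^*U^*)$, your proof reads as though it establishes the stated result when it actually refutes it. A referee-quality write-up would state the corrected identity and exhibit the invariance condition (or a counterexample) separating it from the paper's claim.
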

		\begin{proof}$(1)$
			Consider \begin{align*}
				\ker(\psi(U))&=\{h\in {\m H} \:\:|\:\ \psi(U)h=0\}\\&=\{h\in {\m H} \:\:|\:\ \sum_{i=1}^{l}R_{i}^{\#}UR_{i}h=0\}\\&=\{h\in {\m H} \:\:|\:\ \sum_{i=1}^{l}[R_{i}^{\#}UR_{i}h,h]_{U}=0\}\\&=\{h\in {\m H} \:\:|\:\ \sum_{i=1}^{l}[UR_{i}h,R_{i}h]_{U}=0\}\\&=\{h\in {\m H} \:\:|\:\ \sum_{i=1}^{l}\langle UR_{i}h,UR_{i}h\rangle=0\}\\&=\{h\in {\m H} \:\:|\:\ \sum_{i=1}^{l} \|UR_{i}h\|^{2}=0\}\\&=\{h\in {\m H} \:\:|\:\ UR_{i}h=0,\:\:\text{for each}\:\: 1\leq i\leq l\}\\&=\bigcap_{i=1}^{l}\ker(UR_i).
			\end{align*}
			
			$(2)$ Suppose $\psi(X)=U\phi(U^{*}X)=0$ where $X$ is $U$-positive. It follows that $\phi(U^{*}X)=0,$   and since $\phi$ is a CP map, using the Kraus decomposition, we obtain	$\sum_{i=1}^{l}R_{i}^{*}U^{*}XR_{i}=0.$ As $X$ is $U$-positive ($U^{*}X$ is positive), we get $R_{i}^{*}U^{*}XR_{i}=0$ for each $i.$ Note that $R_{i}^{*}(U^{*}X)^{\frac{1}{2}}=0.$ It implies that $R_{i}^{*}U^{*}X=0.$ Let $h_{1}\in {\rm ran}(X),$ then there exists $h_{2}\in \m H$ such that $X(h_{2})=h_1.$ Now by applying $R_{i}^{*}U^{*}$ on both the sides, we get $R_{i}^{*}U^{*}{h}_1=0$ for each $i.$ Hence ${\rm ran}(X)\subseteq\cap_{i=1}^{l}\ker(R_i^{*}U^{*}).$
			
			Conversely, let ${\rm ran}(X)\subseteq\cap_{i=1}^{l}\ker(R_i^{*}U^{*}),$ then  $\psi(X)=\sum_{i=1}^{l}R_{i}^{\#}XR_{i}=\sum_{i=1}^{l}UR_{i}^{*}U^{*}XR_{i}=0.$

			$(3)$ One can easily see that $|Uh \rangle\langle h| $ is $U$-positive. Indeed, $U^{*}|Uh \rangle\langle h|=|h \rangle\langle h| \geq 0.$ Also, we have $\psi(|Uh \rangle\langle h|)=0,$ and  ${\rm ran}(|Uh \rangle\langle h|)=\mathbb{C}h,$ therefore it directly follows from $(2)$ that $\{h \in {\m H} \:\:|\:\: \psi(|Uh \rangle\langle h|  )=0\}=\cap_{i=1}^{l}\ker(R_i^{*}U^{*}).$

			$(4)$ Let $ h_1 \in {\rm ran}(\psi(U))={\rm ran}(\sum_{i=1}^{l}R_{i}^{\#}UR_{i})={\rm ran}(\sum_{i=1}^{l}UR_{i}^{*}R_{i}).$ Then  $\sum_{i=1}^{l}UR_{i}^{*}R_{i}h_2=h_1$ for some $h_2 \in {\m H}.$ Therefore $ h_1 \in \overline{span}\{UR_{i}^{*}h \:\:|\:\: h \in {\m H},\:\: 1 \leq i \leq l\}.$ Hence ${\rm ran}(\psi(U)) \subseteq \overline{span}\{UR_{i}^{*}h \:\:|\:\: h \in {\m H},\:\: 1 \leq i \leq l\}.$
			
			Conversely, let $ h \in \overline{span}\{UR_{i}^{*}h \:\:|\:\: h \in {\m H},\:\: 1 \leq i \leq l\}.$ Then $ h =\sum_{i=1}^{l}\alpha_{i}UR_{i}^{*}h_i$ where $\alpha_{i} \in \mathbb{C},$ $h_i \in {\m H}.$ We have to show that $h \in {\rm ran}(\psi(U))={\rm ran}(U\sum_{i=1}^{l}R_{i}^{*}R_{i}).$ It is equivalent to show that $ h \in \ker(\sum_{i=1}^{l}R_{i}^{*}R_{i}U^{*})^{\perp},$ that is, $\langle h,h'\rangle_{\m H}=0$ for all $h'\in \ker(\sum_{i=1}^{l}R_{i}^{*}R_{i}U^{*}).$
			
			Consider $h'\in$   $\ker(\sum_{i=1}^{l}R_{i}^{*}R_{i}U^{*}),$ then we have \[0=\sum_{i=1}^{l}[R_{i}^{*}R_{i}U^{*}h',h']_{U^{*}}=\sum_{i=1}^{l}\langle R_{i}^{*}R_{i}U^{*}  h',U^{*}h'\rangle.\] It follows that $R_{i}U^{*}h'=0$ for each $i.$ Observe that \begin{align*}
				\langle h,h'\rangle&=\sum_{i=1}^{l}\alpha_{i}\langle UR_{i}^{*}h_i,h'\rangle=\sum_{i=1}^{l}\alpha_{i}\langle h_i,R_iU^{*}h'\rangle=0,
			\end{align*} which proves that $\text{ran}(\psi(U))=\overline{span}\{UR_{i}^{*}h\:\:|\:\: h \in {\m H},\:\: 1 \leq i \leq l\}.$
		\end{proof}
		\begin{proposition}
			Let $(\m H, U)$ be an $S$-space with the indefinite inner product $[\cdot,\cdot]_{U}.$ Suppose $\phi : B (\m H) \to  B (\m H) $ is a CP map, then  the corresponding linear map $\psi$  from $B(\m H)$ to $B(\m H)$ defined by $\psi(X):=U\phi(U^{*}X)$  is $U$-CP, with the  Kraus $U$-decomposition $\psi(X)=\sum_{i=1}^{l}R_{i}^{\#}XR_{i},$ where $X \in B(\m H)$ and $ R_{i}^{\#}=UR_{i}^{*}U^{*}$ for each $1\leq i \leq l.$ Then the followings are equivalent: \begin{enumerate}
				\item $\psi^{p}(X)=0$ for all $X \in  B(\m H);$ \item $R_{i_1}R_{i_2}\cdots R_{i_p}=0$ for all $i_1,i_2,\ldots,i_p.$
			\end{enumerate}
		\end{proposition}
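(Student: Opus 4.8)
The plan is to reduce the assertion about the iterate $\psi^{p}$ to the analogous assertion about the underlying CP map $\phi$, where the genuine positivity of $\phi$ can be brought to bear.

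First I would establish the iteration formula $\psi^{p}(X)=U\phi^{p}(U^{*}X)$ for every $X\in B(\m H)$ and every $p\in\mathbb N$, by induction on $p$. The base case $p=1$ is just the definition $\psi(X)=U\phi(U^{*}X)$, and the inductive step is a one-line computation, $\psi^{p+1}(X)=\psi(\psi^{p}(X))=U\phi(U^{*}U\phi^{p}(U^{*}X))=U\phi^{p+1}(U^{*}X)$, using $U^{*}U=I$. Since $U$ is invertible and the map $X\mapsto U^{*}X$ is a bijection of $B(\m H)$, this identity shows that condition $(1)$, namely $\psi^{p}(X)=0$ for all $X$, is equivalent to $\phi^{p}\equiv 0$ on $B(\m H)$.

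Next I would unfold the Kraus decomposition of the iterate $\phi^{p}$. Iterating $\phi(Y)=\sum_{i}R_{i}^{*}YR_{i}$ and collecting the terms gives $\phi^{p}(Y)=\sum_{i_{1},\dots,i_{p}}(R_{i_{1}}\cdots R_{i_{p}})^{*}\,Y\,(R_{i_{1}}\cdots R_{i_{p}})$, where I use the anti-multiplicativity of the adjoint, $R_{i_{p}}^{*}\cdots R_{i_{1}}^{*}=(R_{i_{1}}\cdots R_{i_{p}})^{*}$. Thus $\phi^{p}$ is again a CP map whose Kraus operators are exactly the length-$p$ products $S_{\mathbf{i}}:=R_{i_{1}}\cdots R_{i_{p}}$ indexed by all tuples $\mathbf{i}=(i_{1},\dots,i_{p})$, and as $\mathbf{i}$ ranges over all tuples these exhaust precisely the products appearing in condition $(2)$. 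Finally I would invoke the standard positivity argument: a map of the form $Y\mapsto\sum_{\mathbf{i}}S_{\mathbf{i}}^{*}YS_{\mathbf{i}}$ vanishes identically if and only if every $S_{\mathbf{i}}=0$. Indeed, evaluating at $Y=I$ yields $\sum_{\mathbf{i}}S_{\mathbf{i}}^{*}S_{\mathbf{i}}=0$; since each summand is positive, each $S_{\mathbf{i}}^{*}S_{\mathbf{i}}=0$, whence $\|S_{\mathbf{i}}h\|^{2}=\langle S_{\mathbf{i}}^{*}S_{\mathbf{i}}h,h\rangle=0$ for every $h\in\m H$ and therefore $S_{\mathbf{i}}=0$; the converse is immediate. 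Chaining this with the two previous steps gives the equivalence of $(1)$ and $(2)$.

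I do not anticipate a serious obstacle: the bulk of the argument is bookkeeping, and the single piece of real content is the positivity step in the last paragraph. The two points that nonetheless demand care are the correct ordering of factors when expanding $\phi^{p}$, so that the Kraus operators of $\phi^{p}$ are indeed all length-$p$ products and thereby match condition $(2)$ exactly, and the observation that the passage between $\psi$ and $\phi$ via $X\mapsto U^{*}X$ is a bijection, which is what guarantees that the two ``identically zero'' conditions coincide.
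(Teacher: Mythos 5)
Your proposal is correct, and it takes a genuinely different---and in fact sounder---route than the paper's own proof. Both arguments start with the same bookkeeping: the paper expands the iterate as $\psi^{p}(X)=\sum_{i_1,\ldots,i_p}U(R_{i_1}\cdots R_{i_p})^{*}U^{*}X(R_{i_1}\cdots R_{i_p})$, which is the $\psi$-side version of your expansion of $\phi^{p}$. The difference is where positivity enters. The paper evaluates at $X=I$ and pairs with vectors in the indefinite inner product, reducing the hypothesis to $\sum_{i_1,\ldots,i_p}[S_{\mathbf{i}}h,S_{\mathbf{i}}h]_{U}=0$ for all $h$, where $S_{\mathbf{i}}:=R_{i_1}\cdots R_{i_p}$, and then reads off condition (2) from this. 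You instead transfer everything to the honest CP map $\phi$ via the identity $\psi^{p}(X)=U\phi^{p}(U^{*}X)$ (equivalently, you are testing $\psi^{p}$ at $U$ rather than at $I$), so the vanishing sum becomes $\sum_{\mathbf{i}}S_{\mathbf{i}}^{*}S_{\mathbf{i}}=0$ with genuinely positive summands, forcing each $S_{\mathbf{i}}=0$. This difference is not cosmetic: for a general unitary $U$ the quantity $[x,x]_{U}=\langle x,Ux\rangle$ is indefinite (it can vanish, and need not even be real, for $x\neq 0$), so the paper's final step does not follow from what it has derived. Concretely, $\psi^{p}(I)=0$ alone does not imply (2): take $U=\mathrm{diag}(1,-1)$, $p=l=1$, and $R_{1}=|v\rangle\langle w|$ with $v=\frac{1}{\sqrt{2}}(1,1)$ and $w\neq 0$; then $R_{1}^{*}U^{*}R_{1}=\langle v,U^{*}v\rangle\,|w\rangle\langle w|=0$, hence $\psi(I)=UR_{1}^{*}U^{*}R_{1}=0$, although $R_{1}\neq 0$. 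Your argument uses the full hypothesis (1) correctly through the reduction to $\phi^{p}\equiv 0$, and thereby repairs exactly this defect; the paper's computation would be fixed in the same spirit by replacing the test element $I$ with $U$, which yields $\sum_{\mathbf{i}}\|S_{\mathbf{i}}h\|^{2}=0$ for all $h$. So your proof is not only correct but closes a genuine gap in the paper's own argument.
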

		\begin{proof} $(1) \implies (2):$ Let us assume
			for each $X \in  B(\m H),$ we have  \begin{align*}0=
				\psi^{p}(X)=\sum_{i_1,i_2,\ldots,i_p=1}^{l}R_{i_{p},\ldots,i_1}^{\#}XR_{i_1}R_{i_2}\cdots R_{i_p},
			\end{align*} where $R_{i_{p},\ldots,i_1}^{\#}=UR_{i_p}^{*}R_{i_{p-1}}^{*}\cdots R_{i_1}^{*}U^{*}.$ Therefore  $$0=\psi^{p}(I)=\sum_{i_1,i_2,\ldots,i_p=1}^{l}R_{i_{p},\ldots,i_1}^{\#}R_{i_1}R_{i_2}\cdots R_{i_p} .$$
			Now observe that
			\begin{align*}&
				\{h\in {\m H} \:\:|\:\ \sum_{i_1,i_2,\ldots,i_p=1}^{l}R_{i_{p},\ldots,i_1}^{\#}R_{i_1}R_{i_2}\cdots R_{i_p}h=0\}\\=&\{h\in {\m H} \:\:|\:\ \sum_{i_1,i_2,\ldots,i_p=1}^{l}[R_{i_{p},\ldots,i_1}^{\#}R_{i_1}R_{i_2}\cdots R_{i_p}h,h]_{U}=0\}\\=&\{h\in {\m H} \:\:|\:\ \sum_{i_1,i_2,\ldots,i_p=1}^{l}[R_{i_1}R_{i_2}\cdots R_{i_p}h,R_{i_1}R_{i_2}\cdots R_{i_p}h]_{U}=0\},
			\end{align*} which concludes the desired equality $(2).$
			
			$(2) \implies (1):$ Trivial.
		\end{proof}
		
		Suppose $\psi$ is a $U$-CP map from $B(\m H)$ to $B(\m H)$ defined by $\psi(X)=U\phi(U^{*}X).$ Let $\psi$ be a nilpotent map of order $p.$  Define ${\m K}_1:=\ker(\psi(U))$ and ${\m K}_k:=\ker(\psi^{k}(U))\ominus \ker(\psi^{k-1}(U)),$ where $ 2 \leq k\leq p.$ Then $\cap_{k=1}^{p}{\m K}_{k}=\emptyset$ and $\m H= {\m K}_1 \oplus {\m K}_2\oplus \cdots \oplus {\m K}_p.$
		\begin{definition}
			Let $c_i:=\dim({\m K}_i)$ for $1 \leq i \leq p.$ Then $(c_1,c_2, \ldots, c_p)$ is called the { \rm $U$-CP nilpotent type of $\psi.$}
		\end{definition}

		\section{Quantum U-channels and quantum $U$-states}\label{sec4}

		The $U$-states and the quantum $U$-channel, which are the $S$-space versions of the states and quantum channel, respectively, are introduced in this section. Together, we introduce $U$-separable and $U$-entangled states and present the $U$-PPT criterion for $U$-separability of $U$-states.
		\begin{definition}
			Let $\phi : M_{A} \to M_{B}$ be a linear map and  $U_{A}$ and $U_{B}$ be the fundamental unitaries in $M_{A}$ and $ M_{B},$ respectively. Then
			\begin{enumerate}
				\item $\phi $ is a {\rm quantum channel} if it is CP and trace preserving, that is, $\text{Tr}(\phi(V))=\text{Tr}(V)$ where $V \in  M_{A}.$
				\item a linear map $\psi$  from $B(\m H_1)$ to $B(\m H_2)$ defined by $\psi(V):=U_2\phi(U_1^{*}V)$  is a {\rm quantum $(U_{A},U_{B})$-channel} if it is $(U_{A},U_{B})$-CP and trace preserving.
			\end{enumerate}
		\end{definition}
		\begin{remark}
			It is well known that $\phi$ is a quantum channel if and only if there exist $ m \times n$-matrices $R_1,\ldots,R_{l}$ such that 
			\begin{align*}
				\phi(V)=\sum_{i=1}^{l}R_{i}^{*}VR_{i}\:\:\:\:\mbox{and}\:\:\:\: \sum_{i=1}^{l}R_{i}R_{i}^{*}=I
			\end{align*} where $V \in M_{A}.$ Indeed, if $\phi$ is a quantum channel, then it is a CP map and trace preserving. Therefore by Kraus decomposition (\ref{kraus1}), there exist $ m \times n$-matrices $R_1,\ldots,R_{l}$ such that $	\phi(V)=\sum_{i=1}^{l}R_{i}^{*}VR_{i},$ and if $\phi$ is a trace preserving map, then $\phi^{*}(V)=\sum_{i=1}^{l}R_{i}VR_{i}^{*}$ is unital ($ \text{Tr}(X)=\langle I_{X},X \rangle= \text{Tr}(\phi(X))=\langle I_{X},\phi(X) \rangle=\langle \phi^{*}(I_{X}),X \rangle$) which implies $\sum_{i=1}^{l}R_{i}R_{i}^{*}=I.$
		\end{remark}
		Similarly, if $\psi$ is a quantum $(U_{A}, U_{B})$-channel, then by Kraus $U$-decomposition (\ref{kraus}) we have  $\psi(V)=\sum_{i=1}^{l}R_{i}^{\#_{A,B}}VR_{i},$ where $R_{i}^{\#_{A,B}}= U_B R^*_i U_{A}^{*}.$ Since $\psi$ is trace preserving, it means  $\psi^{*}$ is unital and we obtain $I_{B}=\psi^{*}(I_{A})= \sum_{i=1}^{l}R_{i}R_{i}^{\#_{A,B}}.$ Moreover, \begin{align*}
			\sum_{i}R_{i}U_{B}^{*}R_{i}^{\#_{A,B}}=R_{i}U_{B}^{*}U_B R^*_i U_{A}^{*}=U_{A}^{*}.
		\end{align*}
		A {\it quantum state} $\rho \in M_{n}(\mathbb{C})$ is a positive semi-definite matrix with $\text{Tr}(\rho)=1.$\begin{definition}
			Let $U$ be a fundamental unitary in $M_{n}(\mathbb{C}),$ then a matrix $\rho \in M_{n}(\mathbb{C})$ is called a {\rm quantum $U$-state} if the following conditions hold:
			\begin{enumerate}
				\item $\rho$ is $U$-positive, that is, $U^{*}\rho$ is positive and
				\item $\text{Tr}(U^{*}\rho)=1.$
			\end{enumerate}
		\end{definition}
		
		\begin{example}
			Let $U$ be a fundamental unitary in $M_{l}(\mathbb{C}),$ where $ l \in \mathbb{N}.$ Define $\rho\in M_{l}(\mathbb{C})$ as $\rho = |Ue\rangle \langle e|$ where $e\in\mathbb{C}^l$ with $ \|e\|=1.$ Then
			\begin{align*}
				&U^{*}\rho=U^{*}|Ue\rangle \langle e|=|U^*Ue\rangle \langle e|=|e\rangle \langle e|.
			\end{align*} It follows that $U^{*}\rho$ is positive and also note that $\text{Tr}(U^{*}\rho)=\text{Tr}(|e\rangle \langle e|)=\langle e, e\rangle=1.$ Hence $\rho $ is a quantum  $U$-state.
		\end{example}
		\begin{proposition}
			A quantum $(U_{A}, U_{B})$-channel $\psi :M_{A}\to M_{B}$ maps quantum $U_{A}$-states into quantum $U_{B}$-states.
		\end{proposition}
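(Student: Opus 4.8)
The plan is to verify the two defining conditions of a quantum $U_B$-state for the image $\psi(\rho)$, where $\rho$ is an arbitrary quantum $U_A$-state. So I would fix such a $\rho$, recording that $U_A^*\rho\geq 0$ and $\text{Tr}(U_A^*\rho)=1$, and aim to prove that $U_B^*\psi(\rho)\geq 0$ and $\text{Tr}(U_B^*\psi(\rho))=1$.

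For the $U_B$-positivity the argument is immediate: a quantum $(U_A,U_B)$-channel is by definition $(U_A,U_B)$-CP, hence in particular $(U_A,U_B)$-positive, so it carries $U_A$-positive operators to $U_B$-positive operators. Since $\rho$ is $U_A$-positive, $\psi(\rho)$ is $U_B$-positive, i.e. $U_B^*\psi(\rho)\geq 0$. (Equivalently, writing $\psi(V)=U_B\phi(U_A^*V)$ with $\phi$ CP gives $U_B^*\psi(\rho)=\phi(U_A^*\rho)$, and the CP map $\phi$ preserves the positivity of $U_A^*\rho$.) For the normalization I would invoke the Kraus $U$-decomposition $\psi(V)=\sum_{i=1}^{l}R_i^{\#_{A,B}}VR_i$ with $R_i^{\#_{A,B}}=U_BR_i^*U_A^*$, which is available because $\psi$ is a quantum $(U_A,U_B)$-channel. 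Then $U_B^*\psi(\rho)=\sum_{i=1}^{l}R_i^*U_A^*\rho R_i$, and cyclicity of the trace yields $\text{Tr}(U_B^*\psi(\rho))=\text{Tr}((\sum_{i=1}^{l}R_iR_i^*)U_A^*\rho)$. The trace-preserving hypothesis on $\psi$ supplies $\sum_{i=1}^{l}R_iR_i^*=I$, the very identity recorded in the remark preceding the statement, so that $\text{Tr}(U_B^*\psi(\rho))=\text{Tr}(U_A^*\rho)=1$, completing the verification.

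I expect the only delicate point to be the trace computation, where the $U$-twisting must be handled with care: one has to push the unitaries $U_A$ and $U_B$ through the Kraus $U$-decomposition so that, after applying cyclicity, the factor that survives against $U_A^*\rho$ is the untwisted sum $\sum_i R_iR_i^*=I$ rather than $\sum_i R_i^*R_i$; only then does the normalization $\text{Tr}(U_A^*\rho)=1$ close the argument. The positivity step carries no real difficulty, being a direct consequence of the $(U_A,U_B)$-CP assumption.
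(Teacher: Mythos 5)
Your proposal is correct and follows essentially the same route as the paper: both rest on the Kraus $U$-decomposition $\psi(V)=\sum_{i}U_BR_i^*U_A^*VR_i$, with the normalization obtained by cyclicity of the trace together with the identity $\sum_i R_iR_i^*=I$ from the trace-preserving hypothesis. The only cosmetic difference is that you justify $U_B$-positivity directly from the $(U_A,U_B)$-positivity in the definition of a $(U_A,U_B)$-CP map, whereas the paper reads it off the Kraus form via $U_B^*\psi(V)=\sum_i R_i^*U_A^*VR_i\geq 0$; the two arguments are interchangeable.
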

		\begin{proof}
			Let $V$ be a quantum $U_{A}$-state, that is, $V$ is $U_{A}$-positive and  $\text{Tr}(U_{A}^{*}V)=1.$ Since $\psi $ is  a quantum $(U_{A}, U_{B})$-channel, we have \begin{align*}
				\psi(V)=\sum_{i=1}^{l}R_{i}^{\#_{A,B}}VR_{i}=\sum_{i=1}^{l}U_B R^*_i U_{A}^{*}VR_{i},
			\end{align*}
			for some  $ m \times n$-matrices $R_1,\ldots,R_{l}.$ Since  $V$ is $U_{A}$-positive, we have $U_{A}^{*}V\geq 0.$ Therefore	$U_B^{*}\psi(V)=\sum_{i=1}^{l} R^*_i U_{A}^{*}VR_{i} \geq 0,$ that is, $\psi(V)$  is $U_{B}$-positive. Furthermore, we obtain 
			\begin{align*}
				\text{Tr}(U_B^{*}\psi(V))&= \text{Tr}(\sum_{i=1}^{l}R^*_i U_{A}^{*}VR_{i})=\text{Tr}(\sum_{i=1}^{l} U_{A}^{*}VR_{i}R^*_i)=\text{Tr}(U_{A}^{*}V\sum_{i=1}^{l}R_i R_{i}^*)\\&=\text{Tr}(U_{A}^{*}V)=1,
			\end{align*} which proves that $\psi(V)$ is a quantum $U_{B}$-state.
		\end{proof}
		
		A bipartite quantum state $\rho \in M_{A}\otimes M_{B}$ is a {\it product state} if $\rho =\rho_{A}\otimes \rho_{B}$ with $\rho_{A}\in M_{A}^{+}$ and $\rho_{B}\in M_{B}^{+}$ and  is {\it separable} if it is a convex combination of product states. Moreover, it is {\it entangled} if it is not separable. We define $\tau :=t \otimes \text{id}:M_{A}\otimes M_{B} \to M_{A}\otimes M_{B}$ where $t$ is the transpose on $M_{A}.$ We call the $\tau$ map the {\it partial transpose} or {\it the blockwise transpose} and a bipartite quantum state $\rho$ is {\it positive partial transpose} (PPT) if $\rho^{\tau}:=t \otimes \text{id}(\rho)$ is positive. The {\it positive partial transpose criterion} says that if $\rho$ is separable, then $\rho $ is positive partial transpose.
		\begin{definition}
			Let $U_{A}$ and $U_{B}$ be the fundamental unitaries in $M_{A}$ and $ M_{B},$ respectively. Let $U_{A}\otimes U_{B}$ be the fundamental unitary in $M_{A}\otimes M_{B}$ and $\rho \in M_{A}\otimes M_{B}$  be a bipartite quantum  $U_{A}\otimes U_{B}$-state. Then
			\begin{enumerate}
				\item $\rho$ is a {\rm product $U_{A}\otimes U_{B}$-state} if $\rho =\rho_{A}\otimes \rho_{B}$ where $\rho_{A}\in M_{A}^{U+}$ and $\rho_{B}\in M_{B}^{U+}.$
				\item  $\rho$ is {\rm $U_{A}\otimes U_{B}$-separable} if it is a convex combination of product  $U_{A}\otimes U_{B}$-states.
				\item  $\rho$ is {\rm $U_{A}\otimes U_{B}$-entangled} if it is not  $U_{A}\otimes U_{B}$-separable.
				\item $\rho$ is {\rm $U_{A}\otimes U_{B}$-positive partial transpose} if the partial transpose $\rho^{\tau}$ is  $U^t_{A}\otimes U_{B}$-positive, that is,  $(\overline{U}_{A}\otimes U_{B}^{*})(\rho^{\tau})$  is positive.
			\end{enumerate}
		\end{definition}
		\begin{proposition}
			If a  bipartite quantum  $U_{A}\otimes U_{B}$-state $\rho \in M_{A}\otimes M_{B}$ is $U_{A}\otimes U_{B}$-separable, then $\rho$ is $U_{A}\otimes U_{B}$-positive partial transpose.
		\end{proposition}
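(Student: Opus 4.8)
The plan is to reduce to product states by convexity and then check the required positivity of the partial transpose one tensor factor at a time. Since $\rho$ is $U_A\otimes U_B$-separable, I would start by writing $\rho=\sum_k\lambda_k\,\rho_A^{(k)}\otimes\rho_B^{(k)}$ with $\lambda_k\ge0$, where each $\rho_A^{(k)}$ is $U_A$-positive and each $\rho_B^{(k)}$ is $U_B$-positive. Both the partial transpose $\tau=t\otimes\mathrm{id}$ and left multiplication by the fixed matrix $\overline{U_A}\otimes U_B^*$ are linear, and a nonnegative combination of positive semidefinite matrices is again positive semidefinite. Hence it suffices to prove that $(\overline{U_A}\otimes U_B^*)(\rho_A\otimes\rho_B)^\tau\ge0$ for a single product state $\rho_A\otimes\rho_B$ with $\rho_A,\rho_B$ both $U$-positive; the general separable case then follows by forming the corresponding convex combination.

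For a product state I would compute the partial transpose explicitly, using $\tau=t\otimes\mathrm{id}$, to get $(\rho_A\otimes\rho_B)^\tau=\rho_A^t\otimes\rho_B$, so that $(\overline{U_A}\otimes U_B^*)(\rho_A\otimes\rho_B)^\tau=(\overline{U_A}\,\rho_A^t)\otimes(U_B^*\rho_B)$. Because the tensor product of two positive semidefinite matrices is positive semidefinite, it remains only to show that each factor is positive semidefinite. The second factor is immediate: $\rho_B$ being $U_B$-positive means precisely that $U_B^*\rho_B\ge0$.

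The heart of the argument, and the step I expect to require the most care, is showing that $U_A$-positivity of $\rho_A$ forces $\overline{U_A}\,\rho_A^t\ge0$, where one must track the interplay between the transpose, the entrywise conjugation, and the adjoint. I would argue as follows. Since $\rho_A$ is $U_A$-positive, the matrix $P:=U_A^*\rho_A$ is positive semidefinite, hence Hermitian, so $\rho_A=U_AP$ and $\rho_A^*=PU_A^*$. Consequently $U_A\rho_A^*=U_APU_A^*$ is a unitary conjugation of $P$ and is therefore positive semidefinite. Taking entrywise complex conjugates and using $\overline{U_A\rho_A^*}=\overline{U_A}\,\overline{\rho_A^*}$ together with the identity $\overline{\rho_A^*}=\rho_A^t$, I obtain $\overline{U_A}\,\rho_A^t=\overline{U_A\rho_A^*}$, which is positive semidefinite because the complex conjugate of a positive semidefinite matrix is positive semidefinite.

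Combining the two factors, $(\overline{U_A}\,\rho_A^t)\otimes(U_B^*\rho_B)\ge0$, which settles the product-state case; by the reduction of the first paragraph this yields $(\overline{U_A}\otimes U_B^*)(\rho^\tau)\ge0$ for every $U_A\otimes U_B$-separable $\rho$, i.e. $\rho$ is $U_A\otimes U_B$-positive partial transpose. I would note that normalization of the state plays no role here: only positivity and convexity are used, so the same computation proves the statement for any $U_A\otimes U_B$-separable $U$-positive operator.
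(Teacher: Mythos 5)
Your proof is correct and follows essentially the same route as the paper: reduce by convexity to a single product state, take the partial transpose factor-wise, and observe that $\overline{U_A}\,\rho_A^t = \overline{U_A (U_A^*\rho_A) U_A^*}$ is the entrywise conjugate of a unitary conjugation of a positive semidefinite matrix, hence positive semidefinite. The only cosmetic difference is that the paper writes the separable state as a convex combination of rank-one products $U_A|x_i\rangle\langle x_i| \otimes U_B|y_i\rangle\langle y_i|$ and conjugates $|\overline{x_i}\rangle\langle\overline{x_i}|$, whereas you carry out the same computation with an arbitrary $U_A$-positive factor $\rho_A = U_A P$, $P\ge 0$, which even covers the implicit rank-one decomposition step the paper glosses over.
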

		\begin{proof}
			Consider that $\rho $   is $U_{A}\otimes U_{B}$-separable, it means  we can write it as a  convex combination of product  $U_{A}\otimes U_{B}$-states, that is,\begin{align*}
				\rho&=\sum_{i=1}^{l}p_i (U_{A}\otimes U_{B})(|z_{i} \rangle \langle z_{i}|)= \sum_{i=1}^{l}p_i (U_{A}\otimes U_{B})(|x_{i} \rangle \otimes |y_{i}\rangle ) (\langle x_{i}| \otimes \langle y_{i}| ) \\&= \sum_{i=1}^{l}p_i (U_{A}\otimes U_{B})(|x_{i} \rangle \langle x_{i}|  \otimes  |y_{i}\rangle  \langle y_{i}| ) = \sum_{i=1}^{l}p_i U_{A}(|x_{i} \rangle \langle x_{i}| ) \otimes  U_{B}(|y_{i}\rangle  \langle y_{i}| ),
			\end{align*} with $\sum_{i=1}^{l}p_i=1,$ and $|z_{i}\rangle =|x_{i} \rangle \otimes |y_{i}\rangle \in M_{A}\otimes M_{B} .$
			Since
			$(U_{A}(|x_{i} \rangle \langle x_{i}|))^{t}=|\overline{x_{i}} \rangle \langle \overline{x_{i}}|\overline{U_{A}^{*}},
			$ we obtain\begin{align*}
				\rho^{\tau}=t \otimes \text{id}(\rho)=\sum_{i=1}^{l}p_i |\overline{x_{i}} \rangle \langle \overline{x_{i}}|\overline{U_{A}^{*}}\otimes  U_{B}(|y_{i}\rangle  \langle y_{i}|).
			\end{align*}Since $\overline{U_{A}}|\overline{x_{i}} \rangle \langle \overline{x_{i}}|\overline{U_{A}^{*}}$ is a positive matrix in $M_{A},$ $(\overline{U}_{A}\otimes U_{B}^{*})(\rho^{\tau})$ is positive.
		\end{proof}
		\section{U-entanglement breaking maps}\label{sec5}
		In this section, we consider the special class of quantum channels which can be simulated by a classical channel in the following sense: The sender makes a measurement on the input state $\rho$, and send the outcome $k$ via a classical channel to the receiver who then prepares an agreed upon state $R_k.$ Such channels can be written in the form \begin{align*}
			\phi(\rho)=\sum_{k}R_{k}\text{Tr}(E_{k}\rho),
		\end{align*}where each $R_k$ is a {\it density matrix} (density matrices, also called density operators, which conceptually take the role of the state vectors, that is, $R_{k}$ is a positive semi-definite matrix with $\text{Tr}(R_{k})=1  $) and the ${E_k}$ form a positive operator valued measure ($\{E_k\}_{k}$ form a {\it positive operator valued measure} means for each $ k,$ $ E_{k}$ is positive semi-definite and $\sum_{k}E_{k}=id_{A}$). We call this the {\it “Holevo form”} because it was introduced by
		Holevo in \cite{H99}.
		In this context, it is natural to consider the class of channels which break entanglement.
		\begin{definition}Let $\phi: M_{A} \to M_{B}$ be  a quantum channel. If $(id_{n} \otimes \phi) (S)$ is always separable for all bipartite quantum states $S \in M_{n}(\mathbb{C})\otimes M_{A},$ then we call it an {\rm entanglement breaking map}. 	\end{definition}
		Let $U_{A}$ and $U_{B}$ be the fundamental unitaries in $M_{A}$ and $ M_{B},$ respectively. The family $ \{F_{k}\}_{k}$ is a {\it $U_{A}$-positive operator valued measure} if each $ F_{k}U_{A}$ is positive semi-definite and $\sum_{k}F_{k}U_{A}=id_{A}$ (or $\sum_{k}F_{k}=U_{A}^{*}$) and  $D$ is called {\it $U_{A}$-density matrix} if $D$ is a $U_{A}$-positive semi-definite matrix, that is, $U_{A}^{*}D$ is positive semi-definite matrix with $\text{Tr}(U_{A}^{*}D)=1. $
		\begin{definition}
			Let $\psi: M_{A} \to M_{B}$ be  a $(U_{A},U_{B})$-quantum channel. \begin{enumerate}
				\item $\psi$ is said to be {\rm $(U_{A},U_{B})$-entanglement breaking} if $(id_{n} \otimes\psi)(S)$ is $I_{n} \otimes U_{B}$-separable  for any $I_{n} \otimes U_{A}$-density matrix $S \in M_{n}(\mathbb{C})\otimes M_{A}.$
				\item $\psi$ is in {\rm $(U_{A},U_{B})$-Holevo form} if it can be expressed as 
				\begin{align*}
					\psi(\rho)=\sum_{k}D_{k}\text{Tr}(F_{k}\rho),
				\end{align*} where $D_{k}$ is a $U_{B}$-density matrix, that is, $U_{B}^{*}D_{k}$ is positive semi-definite matrix  and $\text{Tr}(U_{B}^{*}D_{k})=1$ and $F_{k}$ is  a $U_{A}$-positive operator valued measure in $M_{A},$ that is  $F_{k}U_{A}$ is positive semi-definite and $\sum_{k}F_{k}U_{A}=id_A.$
			\end{enumerate}
		\end{definition}
		
		\begin{theorem}\label{thrm}
			Let $\psi: M_{A} \to M_{B}$ be  a $(U_{A},U_{B})$-quantum channel. Then the following statements are equivalent:
			\begin{enumerate}
				\item  $\psi$ is  $(U_{A},U_{B})$-entanglement breaking;\item  $\psi$ is in $(U_{A},U_{B})$-Holevo form .
			\end{enumerate}
		\end{theorem}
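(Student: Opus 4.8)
The plan is to reduce the statement to the classical characterization of entanglement breaking channels of Horodecki--Shor--Ruskai via the now-familiar dictionary $\phi(V):=U_B^{*}\psi(U_AV)$. First I would record that this $\phi$ is an ordinary quantum channel: by Proposition \ref{prop1} it is CP, and writing the Kraus $U$-decomposition $\psi(V)=\sum_i U_B R_i^{*} U_A^{*} V R_i$ of the quantum $(U_A,U_B)$-channel $\psi$ (with $\sum_i R_iR_i^{*}=I$), one gets $\phi(V)=\sum_i R_i^{*}VR_i$, so that $\text{Tr}(\phi(V))=\text{Tr}(V\sum_i R_iR_i^{*})=\text{Tr}(V)$; hence $\phi$ is CP and trace preserving. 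The whole proof then rests on transporting the two notions in the theorem across this dictionary and invoking the classical result.

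Next I would establish the separability correspondence. The crucial elementary lemma is that a matrix $\rho\in M_n(\mathbb{C})\otimes M_B$ is $I_n\otimes U_B$-separable if and only if $(I_n\otimes U_B^{*})\rho$ is separable in the usual sense; this follows at once by replacing each $U_B$-positive tensor factor $\rho_{B,k}$ with $U_B^{*}\rho_{B,k}\geq 0$ and conversely (the first leg $M_n(\mathbb{C})$ carries the trivial symmetry $I_n$, so nothing happens there). Using $\psi(U_AX)=U_B\phi(X)$ I would then compute, for any $I_n\otimes U_A$-density matrix $S$ with associated classical density $T:=(I_n\otimes U_A^{*})S$,
\begin{align*}
(\mathrm{id}_n\otimes\psi)(S)=(\mathrm{id}_n\otimes\psi)\bigl((I_n\otimes U_A)T\bigr)=(I_n\otimes U_B)(\mathrm{id}_n\otimes\phi)(T),
\end{align*}
so that $(I_n\otimes U_B^{*})(\mathrm{id}_n\otimes\psi)(S)=(\mathrm{id}_n\otimes\phi)(T)$. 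Combined with the lemma, $(\mathrm{id}_n\otimes\psi)(S)$ is $I_n\otimes U_B$-separable precisely when $(\mathrm{id}_n\otimes\phi)(T)$ is separable; since $S\mapsto T$ is a bijection between $I_n\otimes U_A$-density matrices and ordinary density matrices, $\psi$ is $(U_A,U_B)$-entanglement breaking if and only if $\phi$ is entanglement breaking.

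Then I would match the two Holevo forms. If $\psi(\rho)=\sum_k D_k\,\text{Tr}(F_k\rho)$ is in $(U_A,U_B)$-Holevo form, then $\phi(V)=U_B^{*}\psi(U_AV)=\sum_k (U_B^{*}D_k)\,\text{Tr}\bigl((F_kU_A)V\bigr)$, where $R_k:=U_B^{*}D_k$ is an ordinary density matrix (being a $U_B$-density matrix says exactly $U_B^{*}D_k\geq0$ and $\text{Tr}(U_B^{*}D_k)=1$) and $E_k:=F_kU_A$ is an ordinary POVM (being a $U_A$-POVM says exactly $F_kU_A\geq0$ and $\sum_kF_kU_A=\mathrm{id}_A$); this is the classical Holevo form. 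Running the same substitution backwards through $\psi(V)=U_B\phi(U_A^{*}V)$, with $D_k:=U_BR_k$ and $F_k:=E_kU_A^{*}$, gives the converse. Finally I would invoke the classical theorem that $\phi$ is entanglement breaking if and only if it is in Holevo form, and chain the three equivalences to conclude.

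I expect the main obstacle to be the separability correspondence, specifically pinning down the $I_n\otimes U_B$-separability lemma against the paper's definition of product $U$-states and confirming that the bijection $S\leftrightarrow T$ really carries the full class of $I_n\otimes U_A$-density matrices onto the full class of ordinary density matrices; the Holevo-form bookkeeping and the verification that $\phi$ is a genuine quantum channel are then routine.
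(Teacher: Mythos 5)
Your proposal is correct and takes essentially the same route as the paper: both reduce to the Horodecki--Shor--Ruskai theorem via the dictionary $\phi(V)=U_B^{*}\psi(U_AV)$, the intertwining identity $\mathrm{id}_n\otimes\phi=(I_n\otimes U_B^{*})(\mathrm{id}_n\otimes\psi)(I_n\otimes U_A)$, and the translation of Holevo data $D_k=U_BR_k$, $F_k=E_kU_A^{*}$ (and back). If anything, you are more careful than the paper, which leaves the $I_n\otimes U_B$-separability correspondence, the density-matrix bijection $S\leftrightarrow(I_n\otimes U_A^{*})S$, and the trace preservation of $\phi$ implicit.
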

		\begin{proof}	$(1)\implies (2):$ Suppose $\psi$ is  $(U_{A},U_{B})$-entanglement breaking. The map $\phi$ given by $\phi(V)=U^*_B \psi(U_AV)$ is a quantum channel and we have for each $ n \in \mathbb{N},$
			\begin{align}\label{en}
				id_{n}\otimes \phi=	id_{n}\otimes (U^*_B \psi(U_A))=(I_{n}\otimes U^*_B)(	id_{n}\otimes \psi)(I_{n}\otimes U_{A}).
			\end{align}Let $S\in M_{n}(\mathbb{C}) \otimes M_{A}$ be a  density matrix. One can easily verify that  $(I_{n}\otimes U_{A})S$ is a $(I_{n}\otimes U_{A})$-density matrix, that is, $(I_{n}\otimes U_{A}^{*})(I_{n}\otimes U_{A})S$ is positive and $\text{Tr}((I_{n}\otimes U_{A}^{*})(I_{n}\otimes U_{A})S)=1$ which trivially hold as $
			(I_{n}\otimes U_{A}^{*})(I_{n}\otimes U_{A})S=S.$ Since $(	id_{n}\otimes \psi)(I_{n}\otimes U_{A})S$ is $(I_{n}\otimes U_B)$-separable, $	(id_{n}\otimes \phi)(S)$ is separable. This implies that $\phi$ is an entanglement breaking map. Now using \cite[ Theorem 4]{HPM03},  we can write $\phi$ in the Holevo form, that is, \begin{align*}
				\phi(\rho)=\sum_{k}R_{k}\text{Tr}(E_{k}\rho),
			\end{align*}
			where each $R_{k}$ is a density matrix and $\{E_{k}\}_{k}$ is a positive operator valued measure with $\sum_{k}E_{k}=id_A.$ Observe that \begin{align*}
				\psi(\rho)=U_B \phi(U_{A}^{*}\rho)= \sum_{k}U_{B}R_{k}\text{Tr}(E_{k}U_{A}^{*}\rho)=\sum_{k}D_{k}\text{Tr}(F_{k}\rho),
			\end{align*} where $D_{k}:=U_{B}R_{k}$ and $F_{k}:=E_{k}U_{A}^{*}.$ Note  that $D_{k}$ is a $U_{B}$-density matrix since $U_{B}^{*}D_{k}=U_{B}^{*}U_{B}R_{k}=R_{k}$ and $R_{k}$ is already a density matrix in $M_{B}$ and also $\{F_{k}\}_k$ is a $U_{A}$-positive operator valued measure in $M_{A}$ as $E_{k}U_{A}^{*}U_{A}=E_{k}$ is positive semi-definite and $\sum_{k}E_{k}U_{A}^{*}U_{A}=id_A.$

			$(2)\implies (1):$ Assume that $\psi$ has the $(U_{A},U_{B})$-Holevo form, it means $\psi(\rho)=\sum_{k}D_{k}\text{Tr}(F_{k}\rho),$ where  $D_{k}$ is a $U_{B}$-density matrix and $\{F_{k}\}_{k}$ is a $U_{A}$-positive operator valued measure in $M_{A}.$ Define $\phi$ by $\phi(\rho)=U^*_B \psi(U_A\rho),$ where $\rho\in M_{A}.$ We obtain \begin{align*}
				\phi(\rho)= U^*_B \psi(U_A\rho)=U^*_B \psi(U_A\rho)&=U^*_B \sum_{k}D_{k}\text{Tr}(F_{k}U_A\rho)\\&= \sum_{k} U^*_BD_{k}\text{Tr}(F_{k}U_A\rho).
			\end{align*}Since $D_{k}$ is a $U_{B}$-density matrix and $\{F_{k}\}_{k}$ is a $U_{A}$-positive operator valued measure in $M_{A},$ $\phi$ has a Holevo form and by \cite[ Theorem 4]{HPM03} $\phi$ is an entanglement breaking map and hence Equation (\ref{en}) implies that $\psi$ is a $(U_{A},U_{B})$-entanglement breaking map.
		\end{proof}
		\begin{remark} Let $\phi,\psi: M_{A} \to M_{B}$ be  linear maps such that
			$\psi(\rho)=U_B \phi(U_{A}^{*}\rho),$ where $\rho \in M_{A} .$ As we know $\phi$ is positive if and only if $\psi$ is a  $(U_{A},U_{B})$-positive map. Suppose $\phi$ is a quantum channel, that is, $\psi$ is a  $(U_{A},U_{B})$-quantum channel. Note that $\theta\circ\phi$ is a CP map for any CP map $\theta : M_{B} \to M_{C}$ if and only if $\omega\circ \psi$ is $(U_{A},U_{C})$-CP  for any $(U_{B},U_{C})$-CP $\omega: M_{B} \to M_{C}.$ Therefore, it follows from Theorem \ref{thrm} that $\phi$ is an entanglement breaking map if and only if $\psi$ is a $(U_{A},U_{B})$-entanglement breaking map.
		\end{remark}
		\section{Examples of fundamental unitary and $U$-CP maps}\label{sec6}
		In this section, we provide concrete examples of  completely $U$-positive maps and examples of $3 \otimes 3$ quantum $U$-states which are $U$-entangled and $U$-separable.	It is easy to observe that the $2\times 2$ identity matrix $I$ and the Pauli matrices
		\[\sigma_x=\begin{pmatrix}
			0 & 1\\
			1 & 0
		\end{pmatrix},\quad\sigma_y=\begin{pmatrix}
			0 & -\iota\\
			\iota & 0
		\end{pmatrix},\quad\sigma_z=\begin{pmatrix}
			1 & 0\\
			0 & -1
		\end{pmatrix}\]
		form a basis for $M_2(\mathbb{C})$. That is, for any $A\in M_2(\mathbb{C}),$ we have $A=aI+b\sigma_x+c\sigma_y+d\sigma_z$ where $a,b,c,d\in\mathbb{C}$.
		Any fundamental unitary on the $2$-dimensional complex $S$-space has the form
		\begin{equation}
			U=\begin{pmatrix}\label{mU}
				a & b\\
				-e^{\iota\phi}\overline{b} & e^{\iota\phi}\overline{a}
			\end{pmatrix}
		\end{equation}
		where $\phi\in\mathbb{R}$ and $a,b\in\mathbb{C}$ such that $|a|^{2}+|b|^{2}=1.$ For example, if we choose  $a=1$ and $b=0$, then we have the unitary
		\begin{equation*}\label{U}
			\begin{pmatrix}
				1 & 0\\
				0 & e^{\iota\phi}
			\end{pmatrix}
		\end{equation*}
		which is called a {\rm Phase Gate} (see \cite{qc}) that represents a rotation about the $z$-axis by an
		angle $\phi$ on the Bloch sphere.

		If we define an $S$-space with respect to the fundamental unitary $U$ as in \eqref{mU}, then $U^{*}A=aU^{*}+b\sigma_x^{U}+c\sigma_y^{U}+d\sigma_z^{U},$ where $\sigma_x^{U}=U^{*}\sigma_x,$ $\sigma_y^{U}=U^{*}\sigma_y,$ and $\sigma_z^{U}=U^{*}\sigma_z,$ and we call these matrices {\it $U$-Pauli matrices}.

		Let $U_1=\begin{pmatrix}
			1 & 0\\
			0 & \iota
		\end{pmatrix}$ and $U_2=\frac{1}{\sqrt{2}}\begin{pmatrix}
			1 & -1\\
			1 & 1
		\end{pmatrix}$ be two unitaries which are not symmetries, where $U_{1}$ is the Phase gate for $\phi=\frac{\pi}{2}.$ \begin{enumerate}
			\item  Consider the $S$-space $(\mathbb{C}^2, U_1).$ For any $A\in M_2(\mathbb{C}),$ we have
			\[U^{*}_1A=\left[a\begin{pmatrix}
				1 & 0\\
				0 & -\iota
			\end{pmatrix}-\iota b\begin{pmatrix}
				0 & \iota\\
				1 & 0
			\end{pmatrix}-\iota c\begin{pmatrix}
				0 & 1\\
				\iota & 0
			\end{pmatrix}+d\begin{pmatrix}
				1 & 0\\
				0& \iota
			\end{pmatrix}\right]\] and
			\[(U^{*}_1A)^{*}=\left[\overline{a}\begin{pmatrix}
				1 & 0\\
				0 & \iota
			\end{pmatrix}+\overline{b}\begin{pmatrix}
				0 & \iota\\
				1 & 0
			\end{pmatrix}+ \overline{c}\begin{pmatrix}
				0 & 1\\
				\iota& 0
			\end{pmatrix}+\overline{d}\begin{pmatrix}
				1 & 0\\
				0& -\iota
			\end{pmatrix}\right].\]  
			Comparing $U^{*}_1A$ and $(U^{*}_1A)^*$, one may easily find out that $A$ is $U_1$-self adjoint if and only if $ a=\overline{d},$ $-\iota c=\overline{c}$ and $-\iota b=\overline{b}$ , that is, $A$ has the form
			\begin{align*}A=
				\begin{pmatrix}
					a+d & b-\iota c\\
					b+\iota c & a-d
				\end{pmatrix}=	\begin{pmatrix}
					a+\overline{a} & b+\overline{c}\\
					b-\overline{c} & a-\overline{a}
				\end{pmatrix}=\begin{pmatrix}
					2 \Re(a) & b+\overline{c}\\
					b-\overline{c} & 2\iota\Im(a)
				\end{pmatrix}
			\end{align*} and  $U_{1}^{*}A$ has the form
			\begin{align*}
				U_{1}^{*}A=
				\begin{pmatrix}
					a+d & b-\iota c\\
					c-\iota b & -\iota(a-d)
				\end{pmatrix}=	\begin{pmatrix}
					a+\overline{a} & b+\overline{c}\\
					c+\overline{b} & \iota(a-\overline{a})
				\end{pmatrix}=\begin{pmatrix}
					2 \Re(a) & b+\overline{c}\\
					c+\overline{b} & 2\Im(a)
				\end{pmatrix}
			\end{align*}
			where $a,b,c\in\mathbb{C}$. Further, $U^{*}_1A$ is positive, that is, $A$ is $U_1$-positive if and only if
			\[0\leq \Re(a)\quad\mbox{and}\quad 4\Re(a)\Im(a)\geq  (b+\overline{c})(\overline{b}+{c})\]
			Also, $U^{*}_1A$ is a quantum state, that is, $A$ is a quantum $U_1$-state if and only if
			\[\Re(a)+\Im(a)=\frac{1}{2}.\]
			In particular, if $a=\frac{1}{2}\in\mathbb{R},$ $b= t$ and $c= -t$ for all $t \geq 0$, then all the above relations are trivially satisfied. In other words, for $t \geq 0$,
			\[A=\rho_t=\begin{pmatrix}
				1 & 0\\
				2 t & 0
			\end{pmatrix}\]
			provides a one parameter family of quantum $U_1$-states in $M_2(\mathbb{C})$. Similarly, the following provides a one parameter family of quantum $U_1\otimes U_1$-states
			\[\frac{1}{16}\begin{pmatrix}
				1 & 0 &0 & 0\\
				2 t & 0 & 0 &0\\
				2 t & 0& 0 & 0\\
				4t^{2} & 0 & 0 & 0
			\end{pmatrix},\]
			where $t\geq 0$.

			Since $M_{2}(\mathbb{C})$ is a unital $*$-algebra, any $*$-homomorphism $\pi$ from $M_{2}(\mathbb{C})$ into $M_{2}(\mathbb{C})$  has the form $ \pi(A)=W^{*}AW $ for some unitary matrix $W \in M_{2}(\mathbb{C}).$ If $\phi$ is a  $U_1$-CP map defined on $M_{2}(\mathbb{C}),$ then
			by Theorem \ref{2.2} there exist a $*$-homomorphism $\pi$ on $M_2(\mathbb{C})$ and a matrix $V \in M_{2}(\mathbb{C}) $  such that  $$\phi (A)=V^{\#}\pi(A)V, $$ where $V^{\#}=U_1V^{*}U_1^{* }.$	For example, if we consider $V=\begin{pmatrix}
				\alpha & 0\\
				0 & \beta
			\end{pmatrix}$ and a unitary $W= \begin{pmatrix}
				\gamma & 0\\
				0 & \delta
			\end{pmatrix},$ then  we  get $U_1$-CP $\phi$ in the following form:
			\begin{align*}
				\phi(A)&=V^{\#}\pi(A)V=(U_1V^{*}U_1^{*})(W^{*}AW )V=\begin{pmatrix}
					\overline{\alpha}&0\\
					0 &\overline{\beta}
				\end{pmatrix}\begin{pmatrix}
					a_{11}& \overline{\gamma}a_{12}\delta\\
					\overline{\delta}a_{21}\gamma& a_{22}
				\end{pmatrix}\begin{pmatrix}
					\alpha & 0\\
					0 & \beta
				\end{pmatrix}\\&= \begin{pmatrix}
					\overline{\alpha}\alpha a_{11} & \overline{\alpha}\overline{\gamma}\delta\beta a_{12} \\
					\overline{\beta}\overline{\delta}\gamma\alpha a_{21} &\overline{\beta}\beta a_{22}
				\end{pmatrix},
			\end{align*} where $A =\begin{pmatrix}
				a_{11} & a_{12}\\
				a_{21} & a_{22}
			\end{pmatrix} \in M_{2}(\mathbb{C})$. Furthermore, if $|\alpha|=|\beta|=1,$ then $\phi(A)$ is of the form
			
			\begin{equation*}
				\phi(A)=\begin{pmatrix}
					a_{11} & \overline{\alpha}\overline{\gamma}\delta\beta a_{12} \\
					\overline{\beta}\overline{\delta}\gamma\alpha a_{21} & a_{22}
				\end{pmatrix}.
			\end{equation*}

			\item Consider the $S$-space $(\mathbb{C}^2, U_2).$
			For any $A\in M_2(\mathbb{C}),$ we obtain
			\[U^{*}_2A=\frac{1}{\sqrt{2}}\left[a\begin{pmatrix}
				1 & 1\\
				-1 & 1
			\end{pmatrix}+b\begin{pmatrix}
				1 & 1\\
				1 & -1
			\end{pmatrix}-\iota c\begin{pmatrix}
				-1 & 1\\
				-1 & -1
			\end{pmatrix}+d\begin{pmatrix}
				1 & -1\\
				-1 & -1
			\end{pmatrix}\right].\]
			Comparing $U^{*}_2A$ and $(U^{*}_2A)^*$, one may easily find out that $A$ is $U_2$-self adjoint if and only if $b$ and $d$ are reals and $c=-\iota\overline{a}$, that is, $A$ has the form
			\[\begin{pmatrix}
				a+d & -\overline{a}+b\\
				\overline{a}+b & a-d
			\end{pmatrix}\]
			where $a\in\mathbb{C}$ and $b,d\in\mathbb{R}$. Further, $U^{*}_2A$ is positive, that is, $A$ is $U_2$-positive if and only if
			\[-(b+d)\leq 2\Re(a)\quad\mbox{and}\quad b^2+d^2\leq 2((\Re(a))^2-(\Im(a))^2).\]
			Also, $U^{*}_2A$ is a quantum state, that is, $A$ is a quantum $U_2$-state if and only if
			\[\Re(a)=\frac{\sqrt{2}}{4},\quad-(b+d)\leq\frac{\sqrt{2}}{2}\quad\mbox{and}\quad b^2+d^2\leq\frac{1}{4}-2(\Im(a))^2.\]
			In particular, if $a=\sqrt{2}/4\in\mathbb{R}$ and $b=d=t/4$, with $-\sqrt{2}\leq t\leq\sqrt{2}$, then all the above relations are trivially satisfied. In other words, for $-\sqrt{2}\leq t\leq\sqrt{2}$,
			\[\rho_t=\frac{1}{4}\begin{pmatrix}
				t+\sqrt{2} & t-\sqrt{2}\\
				t+\sqrt{2} & -t+\sqrt{2}
			\end{pmatrix}\]
			provides a one parameter family of quantum $U_2$-states in $M_2(\mathbb{C})$. Similarly, the following provides a one parameter family of quantum $U_2\otimes U_2$-states
			\[\frac{1}{16}\begin{pmatrix}
				t^2+2\sqrt{2}t+2 & t^2-2 & t^2-2 & t^2-2\sqrt{2}t+2\\
				t^2+2\sqrt{2}t+2 & -t^2+2 & t^2-2 & -t^2+2\sqrt{2}t-2\\
				t^2+2\sqrt{2}t+2 & t^2-2 & -t^2+2 & -t^2+2\sqrt{2}t-2\\
				t^2+2\sqrt{2}t+2 & -t^2+2 & -t^2+2 & t^2-2\sqrt{2}t+2
			\end{pmatrix},\]
			where $-\sqrt{2}\leq t\leq\sqrt{2}$.

			Also, similar to the earlier example, we  get any $U_2$-CP map $\phi$ in the following form:
			\begin{align*}
				\phi(A)&=V^{\#}\pi(A)V=(U_2V^{*}U_2^{*})(W^{*}AW )V\\&=\frac{1}{{2}}\begin{pmatrix}
					\overline{\alpha}+\overline{\beta} & \overline{\alpha}-\overline{\beta}\\
					\overline{\alpha}-\overline{\beta} & \overline{\alpha}+\overline{\beta}
				\end{pmatrix}\begin{pmatrix}
					a_{11}& \overline{\gamma}a_{12}\delta\\
					\overline{\delta}a_{21}\gamma& a_{22}
				\end{pmatrix}\begin{pmatrix}
					\alpha & 0\\
					0 & \beta
				\end{pmatrix}\\&= \frac{1}{{2}}\begin{pmatrix}
					(\overline{\alpha}+\overline{\beta})\alpha a_{11}+(\overline{\alpha}-\overline{\beta})\overline{\delta}\gamma\alpha a_{21} & (\overline{\alpha}+\overline{\beta})\overline{\gamma}\delta\beta a_{12} +(\overline{\alpha}-\overline{\beta})\beta a_{22}\\
					(\overline{\alpha}-\overline{\beta})\alpha a_{11}+(\overline{\alpha}+\overline{\beta})\overline{\delta}\gamma\alpha a_{21} & (\overline{\alpha}-\overline{\beta})\overline{\gamma}\delta\beta a_{12} +(\overline{\alpha}+\overline{\beta})\beta a_{22}
				\end{pmatrix},
			\end{align*} where $A =\begin{pmatrix}
				a_{11} & a_{12}\\
				a_{21} & a_{22}
			\end{pmatrix} \in M_{2}(\mathbb{C})$. Also if $|\alpha|=|\beta|=1,$ then $\phi(A)$ is of the form
			
			\begin{equation*}
				\phi(A)=\frac{1}{{2}}\begin{pmatrix}
					(1+\overline{\beta}\alpha) a_{11}+(1-\overline{\beta}\alpha)\overline{\delta}\gamma a_{21} & (\overline{\alpha}\beta+1)\overline{\gamma}\delta a_{12} +(\overline{\alpha}\beta -1)a_{22}\\
					(1-\overline{\beta}\alpha) a_{11}+(1+\overline{\beta}\alpha)\overline{\delta}\gamma a_{21} & (\overline{\alpha}\beta-1)\overline{\gamma}\delta a_{12} +(\overline{\alpha}\beta+1) a_{22}
				\end{pmatrix}.
			\end{equation*}

			\item Let $\mathbb{C}^{3}$ be a $3$-dimensional $S$-space with an indefinite metric induced by $U_3$, where
			$U_3=\frac{1}{\sqrt{2}}\begin{pmatrix}
				1 & -1 & 0\\
				1 & 1 & 0\\
				0 & 0 & \sqrt{2}
			\end{pmatrix}.$
			It is easy to observe that the matrices
			\begin{align*}
				&\mu_1=\begin{pmatrix}
					1 & 0 & 0\\
					1 & 0 & 0\\
					0 & 0 & 0
				\end{pmatrix},\quad
				\mu_2=\begin{pmatrix}
					0 & 1 & 0\\
					0 & 1 & 0\\
					0 & 0 & 0
				\end{pmatrix},\quad
				\mu_3=\begin{pmatrix}
					0 & 0 & 1\\
					0 & 0 & 1\\
					0 & 0 & 0
				\end{pmatrix},\\
				&\mu_4=\begin{pmatrix}
					-1 & 0 & 0\\
					1 & 0 & 0\\
					0 & 0 & 0
				\end{pmatrix},\quad
				\mu_5=\begin{pmatrix}
					0 & -1 & 0\\
					0 & 1 & 0\\
					0 & 0 & 0
				\end{pmatrix},\quad
				\mu_6=\begin{pmatrix}
					0 & 0 & -1\\
					0 & 0 & 1\\
					0 & 0 & 0
				\end{pmatrix},\\
				&\mu_7=\begin{pmatrix}
					0 & 0 & 0\\
					0 & 0 & 0\\
					\sqrt{2} & 0 & 0
				\end{pmatrix},\quad
				\mu_8=\begin{pmatrix}
					0 & 0 & 0\\
					0 & 0 & 0\\
					0 & \sqrt{2} & 0
				\end{pmatrix},\quad
				\mu_9=\begin{pmatrix}
					0 & 0 & 0\\
					0 & 0 & 0\\
					0 & 0 & \sqrt{2}
				\end{pmatrix}
			\end{align*}
			form a basis for $M_3(\mathbb{C})$. Thus, for any $A\in M_3(\mathbb{C})$, we have $A=\sum\limits_{i=1}^{9}a_i\mu_{i},$ where $a_i\in\mathbb{C}$. Then, we get
			\begin{align}\label{mat}
				A=\begin{pmatrix}
					a_1-a_4 & a_2-a_5 & a_3-a_6\\
					a_1+a_4 & a_2+a_5 & a_3+a_6\\
					a_7\sqrt{2} & a_8\sqrt{2} & a_9\sqrt{2}
				\end{pmatrix}.
			\end{align}
			Since
			\begin{align*}
				U_3^{*}A=\sqrt{2}\begin{pmatrix}
					a_1 & a_2 & a_3\\
					a_4 & a_5 & a_6\\
					a_7 & a_8 & a_9
				\end{pmatrix},
			\end{align*}
			after comparing $U_3^{*}A$ and $(U_3^{*}A)^*$, one may easily find out that $A$ is $U_3$-self adjoint if and only if $a_1,$ $a_5$ and $a_9$ are reals and $a_2=\overline{a_4},$ $a_3=\overline{a_7}$ and $a_6=\overline{a_8}$, that is, $U_3^{*}A$ has the form
			\[U_3^{*}A=\sqrt{2}\begin{pmatrix}
				a_1 & a_2 & a_3\\
				\overline{a_2} & a_5 & a_6\\
				\overline{a_3} & \overline{a_6} & a_9
			\end{pmatrix}.\]
			Further, $U_3^{*}A$ is positive, that is, $A$ is $U_3$-positive if and only if the following conditions hold:
			\begin{align}
				&a_1\geq 0,\\
				&a_1a_5-|a_2|^{2}\geq 0\\
				\mbox{and}\quad &a_1a_5a_9-a_1|a_6|^{2}-|a_2|^{2}a_9-|a_3|^2a_5+ 2\Re(a_2\overline{a_3}a_6)\geq 0.
			\end{align}
			Also, $U_3^{*}A$ is a quantum state, that is, $A$ is a quantum $U_3$-state if and only if
			\[a_1+a_5+a_9=\frac{1}{\sqrt{2}}.\]
			In particular, if we choose $a_i=\dfrac{1}{3\sqrt{2}}$ in \eqref{mat}, then the matrix
			$A=\dfrac{1}{3}\begin{pmatrix}
				0 & 0 & 0\\
				\sqrt{2} & \sqrt{2} & \sqrt{2}\\
				1 & 1 &1
			\end{pmatrix}$
			is a $U_3$-state, where
			\begin{align*}
				U_3^{*}A=\frac{1}{3}\begin{pmatrix}
					1 & 1 & 1\\
					1 & 1 & 1\\
					1 & 1 & 1
				\end{pmatrix}.
			\end{align*}
			Using this example we give the following quantum separable $U_3\otimes U_3$-state:
			\begin{align*}
				\frac{1}{9}\begin{pmatrix}
					0 & 0 & 0 & 0 & 0 & 0 & 0 & 0 & 0\\
					0 & 0 & 0 & 0 & 0 & 0 & 0 & 0 & 0\\
					0 & 0 & 0 & 0 & 0 & 0 & 0 & 0 & 0\\
					0 & 0 & 0 & 0 & 0 & 0 & 0 & 0 & 0\\
					2 & 2 & 2 & 2 & 2 & 2 & 2 & 2 & 2\\
					\sqrt{2} & \sqrt{2} & \sqrt{2} & \sqrt{2} & \sqrt{2} & \sqrt{2} & \sqrt{2} & \sqrt{2} & \sqrt{2}\\
					0 & 0 & 0 & 0 & 0 & 0 & 0 & 0 & 0\\
					\sqrt{2} & \sqrt{2} & \sqrt{2} & \sqrt{2} & \sqrt{2} & \sqrt{2} & \sqrt{2} & \sqrt{2} & \sqrt{2}\\
					1 & 1 & 1 & 1 & 1 & 1 & 1 & 1 & 1
				\end{pmatrix}.
			\end{align*}
			In \cite{choi 2}, Choi gave the following entangled state which has positive partial transpose:
			\begin{align*}
				\begin{pmatrix}
					1&0&0&0&1&0&0&0&1\\
					0&2&0&1&0&0&0&0&0\\
					0&0&\frac{1}{2}&0&0&0&1&0&0\\
					0&1&0&\frac{1}{2}&0&0&0&0&0\\
					1&0&0&0&1&0&0&0&1\\
					0&0&0&0&0&2&0&1&0\\
					0&0&1&0&0&0&2&0&0\\
					0&0&0&0&0&1&0&\frac{1}{2}&0\\
					1&0&0&0&1&0&0&0&1
				\end{pmatrix}.
			\end{align*}
			Consider
			\begin{align*}
				C:=\frac{2}{21}\begin{pmatrix}
					1&0&0&0&1&0&0&0&1\\
					0&2&0&1&0&0&0&0&0\\
					0&0&\frac{1}{2}&0&0&0&1&0&0\\
					0&1&0&\frac{1}{2}&0&0&0&0&0\\
					1&0&0&0&1&0&0&0&1\\
					0&0&0&0&0&2&0&1&0\\
					0&0&1&0&0&0&2&0&0\\
					0&0&0&0&0&1&0&\frac{1}{2}&0\\
					1&0&0&0&1&0&0&0&1
				\end{pmatrix}.
			\end{align*}
			Note that
			\begin{align*}
				U_3\otimes U_3=\frac{1}{2}\begin{pmatrix}
					1 & -1 & 0 & -1 & 1 & 0 & 0 & 0 & 0\\
					1 & 1 & 0 & -1 & -1 & 0 & 0 & 0 & 0\\
					0 & 0 & \sqrt{2} & 0 & 0 & -\sqrt{2} & 0 & 0 & 0\\
					1 & -1 & 0 & 1 & -1 & 0 & 0 & 0 & 0\\
					1 & 1 & 0 & 1 & 1 & 0 & 0 & 0 & 0\\
					0 & 0 & \sqrt{2} & 0 & 0 & \sqrt{2} & 0 & 0 & 0\\
					0 & 0 & 0 & 0 & 0 & 0 & \sqrt{2} & -\sqrt{2} & 0\\
					0 & 0 & 0 & 0 & 0 & 0 & \sqrt{2} & \sqrt{2} & 0\\
					0 & 0 & 0 & 0 & 0 & 0 & 0 & 0 & 2
				\end{pmatrix}.
			\end{align*}
			One may easily check that
			\begin{align*}
				A=(U_3\otimes U_3)C=\frac{1}{21}\begin{pmatrix}
					2 & -3 & 0 & -\frac{3}{2} & 2 & 0 & 0 & 0 & 2\\
					0 & 1 & 0 & \frac{1}{2} & 0 & 0 & 0 & 0 & 0\\
					0 & 0 & \frac{1}{\sqrt{2}} & 0 & 0 & -2\sqrt{2} & \sqrt{2} & -\sqrt{2} & 0\\
					0 & -1 & 0 & -\frac{1}{2} & 0 & 0 & 0 & 0 & 0\\
					2 & 3 & 0 & \frac{3}{2} & 2 & 0 & 0 & 0 & 2\\
					0 & 0 & \frac{1}{\sqrt{2}} & 0 & 0 & 2\sqrt{2} & \sqrt{2} & \sqrt{2} & 0\\
					0 & 0 & \sqrt{2} & 0 & 0 & -\sqrt{2} & 2\sqrt{2} & -\frac{1}{\sqrt{2}} & 0\\
					0 & 0 & \sqrt{2} & 0 & 0 & \sqrt{2} & 2\sqrt{2} & \frac{1}{\sqrt{2}} & 0\\
					2 & 0 & 0 & 0 & 2 & 0 & 0 & 0 & 2
				\end{pmatrix}
			\end{align*}
			is a $U_3\otimes U_3$-entangled state.
		\end{enumerate}

		\subsection*{Acknowledgement}
		Azad Rohilla thanks Harsh Trivedi for a research visit during February-March 2024 at The LNM Institute of Information Technology.  Harsh Trivedi is supported by MATRICS-SERB  
		Research Grant, File No: MTR/2021/000286, by 
		SERB, Department of Science \& Technology (DST), Government of India.   Harsh Trivedi acknowledges the DST-FIST program (Govt. of India) FIST - No. SR/FST/MS-I/2018/24.

	\end{document}